\newtheorem{Theorem}{Theorem}[section]
\newtheorem{Example}[Theorem]{Example}
\newtheorem{Lemma}[Theorem]{Lemma}
\newtheorem{Proposition}[Theorem]{Proposition}
\newtheorem{Definition}[Theorem]{Definition}
\newtheorem{Corollary}[Theorem]{Corollary}
\newtheorem{Conjecture}[Theorem]{Conjecture}
\newtheorem{Remark}[Theorem]{Remark}
\newtheorem{Proof of Theorem 3.1}[Theorem]{Proof of Theorem 3.1}
\newcommand{\Bg}{{\mathrm{Bg}}}
\newcommand{\Dg}{{\mathrm{deg}}}
\newcommand{\tr}{\triangleleft}
\title[Zircons and smooth Bruhat intervals in symmetric groups]{Zircons and smooth Bruhat intervals in symmetric groups}
\author{Vincent Umutabazi}
\address{Department of Mathematics, Link\"oping University, SE-581 83 Link\"oping, Sweden}
\email{vincent.umutabazi@liu.se}
\begin{document}

\begin{abstract}
In this paper, we prove that if the dual of a Bruhat interval in a Weyl group is a zircon, then that interval is rationally smooth. 
Investigating when the converse holds, and drawing inspiration from conjectures by
Delanoy, leads us to pose two conjectures. If true, they imply that for
Bruhat intervals in type $A$, duals of smooth intervals, zircons, and being
isomorphic to lower intervals are all equivalent.
As a verification, we have checked our conjectures in types $A_n$, $n\leq 8$.

\end{abstract}

\maketitle
\section{Introduction}\label{Sec1}
In \cite{marietti_1}, Marietti introduced the notion of a zircon as a way to generalize Bruhat orders on Coxeter groups. Independently, du Cloux \cite{MR1742435} defined the equivalent concept of completely compressible posets. It is a direct consequence of the definition that every lower (i.e., one which contains the identity element) Bruhat interval is a zircon. 

Delanoy \cite{MR2397355} conjectured that in simply laced Weyl groups, no other zircons appear. That is, if a Bruhat interval is a zircon, it is isomorphic to a lower interval. 

In this paper, it is proved that if $I=[u,w]$ is a Bruhat interval in a Weyl group $W$ and the dual $I^{\ast}$ is a zircon, then $I$ is rationally smooth (i.e., the Schubert variety $X(w)$ is rationally smooth at a point corresponding to $u$). We conjecture that the converse of this statement is true in type $A$. Moreover, we propose a stronger conjecture which is more suitable for testing; see Conjecture \ref{ConjC4} below for the precise statement. In particular it implies, in type $A$, that for a Bruhat interval $I$, $I$ is a zircon if and only if $I$ is isomorphic to a lower interval (possibly in a different type) if and only if $I^{\ast}$ is smooth. By computer testing, we have verified that Conjecture \ref{ConjC4} holds in types $A_n$, $n\leq 8$.

As an outline, this paper is organized as follows. In Section \ref{Sect}, we recall some properties about posets, Coxeter groups and Schubert varieties. In Section \ref{Secr}, we prove the connection between intervals that are zircons and rationally smooth intervals.
Section \ref{sec:main} contains the main results. There we present our main conjectures, deduce consequences, and report on computational evidence.
\section{Preliminaries}\label{Sect}
In this section we recall some properties of partially ordered sets and Coxeter groups that will be used in the sequel. For more information, consult \cite{MR2133266}, \cite{MR1066460}, and  \cite{MR2868112}.  

\subsection{Partially ordered sets} 
Let $P$ be a partially ordered set (or a \emph{poset}). If $P$ contains an element $\hat{0}$ such that $\hat{0}\leq x$ for all $x\in P$, then $\hat{0}$ is the \emph{minimum}. 
Similarly, if $P$ contains an element $\hat{1}$ such that $x\leq \hat{1}$ for all $x\in P$, then $\hat{1}$ is the \emph{maximum}. If $x,y\in P$ are such that $x<y$ and there is no $z\in P$ with $x<z<y$, we say that $x$ is \emph{covered} by $y$ and write $x\tr y$.
An \emph{order ideal} of $P$ is an induced subposet $J\subseteq P$ which  satisfies the property that, for every $z\in J$, all elements below $z$ are also in $J$ (i.e, $y\leq z\in J$ implies $y\in J$). An order ideal having a maximum is called \emph{principal}.
The induced subposet of $P$ defined by $[a,b]:=\lbrace x\in P: a\leq x \leq b \rbrace$ is called a (\emph{closed}) \emph{interval}. By convention, we shall reserve interval notation for intervals with more than one element. That is, if we write $[a,b]$, it is understood that $a<b$.
For two posets $P$ and $Q$, a map $\alpha:P\rightarrow Q $ is \emph{order-preserving} if for all $x,y\in P$, $x\leq y$ in $P$ implies that $\alpha (x)\leq \alpha (y)$ in $Q$. We say that two posets $P$ and $Q$ are \emph{isomorphic}, and write $P\cong Q$, if there is an order-preserving bijection $P\rightarrow Q$ whose inverse is order-preserving.
 \begin{Definition}
 	Let $P$ be a poset. The \emph{dual} of $P$, denoted by $P^{\ast}$, is the poset having the same underlying set as $P$ but with the order relation given by $a\leq b$ in $P^{\ast}$ if and only if $b\leq a$ in $P$.
 \end{Definition}
By a \emph{matching} $M$ on $P$ we mean an involution  $M:P\rightarrow P$ such that for all $x\in P$, we have $M(x)\tr x$ or $x\tr M(x)$.
\begin{Definition}
A matching $M:P\rightarrow P$ is \emph{special} if for all $x,y\in P$ with $x\tr y$, we have $M(x)=y$ or $M(x)< M(y)$.
\end{Definition}
In general, special matchings were invented by Brenti \cite{brenti_1, MR2095476}. If $P$ is an Eulerian poset, \emph{compression labellings} on $P$ were independently introduced by du Cloux \cite{MR1742435}.  These compression labellings are equivalent to special matchings.
\begin{Definition}\label{Zirc}
A \emph{zircon} is a poset $P$, such that for every non-minimal element $x\in P$, the principal order ideal $\{ v\in P: v\leq x \}$ is finite and has a special matching.
\end{Definition}
Thus, if $P$ is a zircon, so is every order ideal of $P$.

Originally, in \cite{marietti_1}, Marietti defined zircons in a different way compared to Definition \ref{Zirc}. However the two definitions are equivalent as was proven in \cite{hult}.
\subsection{Coxeter groups and Schubert varieties} 
A \emph{Coxeter system} is a pair $(W,S)$, where $W$ is the \emph{Coxeter group} generated by a set $S$ of \emph{simple reflections} such that for all $s\neq s'\in S$, $s^{2}=e$, and $(ss')^{m(s,s')}=(s's)^{m(s,s')}=e$ for some $m(s,s')\in \{2,3,\ldots \}\cup \{ \infty \}$. If $m(s,s')=\infty$, it means that there is no relation between $s$ and $s'$. Here $e$ denotes the identity element of $W$.  
Any element $w\in W$ can be written as a product of simple reflections as $w=s_1s_2\cdots s_i$. Let the \emph{length} $\ell(w)$ of $w$ be the smallest $i$ for which such an expression exists. 

\begin{Definition}
A Coxeter system $(W,S)$ is said to be \emph{simply laced} if $m(s,s')\in \lbrace 2,3 \rbrace$ for all $s\neq s'\in S$, otherwise it is \emph{multiply laced}.
\end{Definition}
Recall that finite irreducible Coxeter groups have been classified. They are of types $A_n$, $n\geq 1$, $B_n$, $n\geq 2$, $D_n$, $n\geq 4$, $E_6$, $E_7$, $E_8$, $F_4$, $H_3, H_4$, and $I_{2}(m)$, $m\geq 3$. For example, a symmetric group $S_n$ generated by simple transpositions $s_i=(i,i+1)$, for all $1\leq i \leq n-1$, is a Coxeter group of type $A_{n-1}$. More on the classification can be found in \cite{MR2133266} and  \cite{MR1066460}.

For $ w\in W$, define  the \emph{left descent set} of $w$ as $$D_{L}(w):=\{ s\in S: \ell(sw)<\ell(w)\}.$$
Let  $T:=\lbrace wsw^{-1}: s\in S, w\in W \rbrace$ be the set of \emph{reflections} in $W$. For $u,w\in W$, we write $u\rightarrow w$ if there exists $t\in T$ such that $w=tu$ and $\ell (u)<\ell(w)$.

\begin{Definition}
The \emph{Bruhat order} of $(W,S)$ is the partial order relation on $W$ defined by $u\leq w$ if there is a sequence $u=w_{0}\rightarrow w_{1}\cdots \rightarrow w_{k}=w$.
\end{Definition}
\begin{Definition}
The \emph{Bruhat graph} of $(W,S)$, denoted by $\emph{Bg}(W)$, is the graph whose vertex set is $W$ and whose edge set is $\lbrace \lbrace u,v\rbrace: u\rightarrow v \rbrace$.
\end{Definition}
Notice that we consider Bruhat graphs to be undirected.

Let $[u,w]:=\{ y\in W:u\leq y \leq w\}$ be a \emph{Bruhat interval} in $W$.
The Bruhat graph of $[u,w]$, denoted by $\Bg[u,w]$, is the subgraph of 
$\text{Bg}(W)$ induced by $[u,w]$. The \emph{degree} of a vertex $v$ in $\Bg[u,w]$, denoted by $\Dg_{u,w}(v)$, is defined as the number of edges that are incident to $v$ in $\Bg[u,w]$.

The following lemma, known as the \emph{Lifting property} (see \cite{MR2133266} and \cite{MR435249}), is a fundamental property of the Bruhat order of $W$. We shall employ it in the proofs of some properties appearing in later sections. 
\begin{Lemma}\label{Lft}
Suppose that $x,y\in W$ where $x<y$ and $s\in D_{L}(y)\setminus D_{L}(x) $. Then $x\leq sy$ and $sx\leq y$.
\end{Lemma}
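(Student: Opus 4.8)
The plan is to obtain both inequalities directly from the subword characterization of the Bruhat order (standard for Coxeter groups; see \cite{MR2133266}): for $v,w\in W$ one has $v\le w$ if and only if every reduced word for $w$ contains a reduced subword representing $v$. I will also use the elementary dictionary that, for $w\in W$ and $s\in S$, $s\in D_{L}(w)$ holds if and only if $w$ admits a reduced expression beginning with $s$, equivalently $\ell(sw)=\ell(w)-1$ (and $\ell(sw)=\ell(w)+1$ otherwise).

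Since $s\in D_{L}(y)$, first fix a reduced expression $y=s\,s_{2}\cdots s_{q}$ with $q=\ell(y)$ (such an expression exists because $\ell(sy)=\ell(y)-1$), so that $s_{2}\cdots s_{q}$ is a reduced expression for $sy$. As $x<y$, the subword property provides a reduced subword of $s\,s_{2}\cdots s_{q}$ equal to $x$, say in positions $i_{1}<\cdots<i_{k}$ with $k=\ell(x)$. The crucial observation is that this subword cannot occupy position $1$: otherwise $x$ would have a reduced expression starting with $s$, forcing $s\in D_{L}(x)$ and contradicting the hypothesis $s\notin D_{L}(x)$. Hence $i_{1}\ge 2$, so $s_{i_{1}}\cdots s_{i_{k}}$ is a reduced subword of the reduced expression $s_{2}\cdots s_{q}$ of $sy$; by the subword property this yields $x\le sy$.

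For the second inequality, consider the word $s\,s_{i_{1}}\cdots s_{i_{k}}$: it represents the group element $sx$, it is a subword of $s\,s_{2}\cdots s_{q}=y$ (namely the positions $1,i_{1},\dots,i_{k}$), and it has length $k+1$. Since $s\notin D_{L}(x)$ we have $\ell(sx)=\ell(x)+1=k+1$, so this word is reduced; the subword property then gives $sx\le y$, completing the argument. I do not anticipate a real obstacle here; the one place demanding care is the descent/reduced-word dictionary used to exclude position $1$. If one prefers an argument that does not invoke the subword theorem, one can instead induct on $\ell(y)$ and use the Exchange Condition applied to a reduced word for $y$ whose first letter is $s$, but the route above is the most direct.
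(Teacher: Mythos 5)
Your argument is correct: every reduced subword of $s\,s_2\cdots s_q$ representing $x$ must avoid position $1$ (else $s\in D_L(x)$), which gives $x\le sy$, and prepending $s$ to such a subword produces a reduced word for $sx$ inside the reduced word for $y$, giving $sx\le y$. Note that the paper does not prove this lemma at all but cites it as the known Lifting Property (see \cite{MR2133266} and \cite{MR435249}); your subword-property proof is essentially the standard textbook argument found in those references, so it fills the gap in exactly the expected way.
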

The idea behind the concept of special matchings is to mimic the Lifting property. More precisely, multiplication by any descent element of $x$ is a special matching of 
$[e,x]$. In particular, the Bruhat order on any Coxeter group is a zircon. 

Let $G$ be a semi-simple, simply connected algebraic group over $\mathbb{C}$ where $\mathcal{T}\subset \mathcal{B}$ is a maximal torus contained in a Borel subgroup of $G$. Let $W=N(\mathcal{T})/\mathcal{T}$ be  the Weyl group where $N(\mathcal{T})$ is the normalizer of $\mathcal{T}$ in $G$. Then, $W$ is a finite Coxeter group.
The \emph{flag variety} $G/\mathcal{B}$ is a disjoint union of Schubert cells whose closures are the Schubert varieties. They are indexed by $W$; we write $X(w)$ for the Schubert variety corresponding to $w\in W$.  More on Schubert varieties can be found in \cite{MR1782635}.

\section{Zircons and rationally smooth intervals}\label{Secr}
  From now on, $W$ is a Weyl group.
In this paper, Theorem \ref{Thc} which is due to Carrell and Peterson, is used as the definition of rational smoothness.
\begin{Theorem}\cite{MR1278700}\label{Thc}
Let $[u,w]$ be some Bruhat interval in $W$. Then, $X(w)$ is rationally smooth at a point corresponding to  $u$ if and only if $\mathrm{deg}_{y,w}(y) =\ell(w)-\ell(y)$ for all $y\in [u,w]$.
\end{Theorem}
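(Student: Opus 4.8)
Since Theorem~\ref{Thc} is the Carrell--Peterson criterion, quoted verbatim from \cite{MR1278700}, the plan below is really a plan for recovering its known proof. The argument splits into a geometric half and a combinatorial half, bridged by the \emph{interval Poincar\'e polynomial}
\[
  P_{[x,w]}(q):=\sum_{x\le y\le w}q^{\,\ell(y)-\ell(x)},\qquad x\in[u,w].
\]
For the geometric half I would first invoke the standard translation of rational smoothness into Kazhdan--Lusztig data: $X(w)$ is rationally smooth at the point corresponding to $u$ if and only if $P_{x,w}(q)=1$ for every $x\in[u,w]$, equivalently the local intersection cohomology of $X(w)$ is trivial at that point (Deodhar; Kazhdan--Lusztig). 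Rational smoothness is a local Poincar\'e-duality condition, and feeding in the Bia\l ynicki-Birula cell decomposition of $X(w)$ indexed by $[e,w]$ --- together with the fact that, $X(w)$ being $\mathcal{B}$-invariant, rational smoothness along a Schubert cell is equivalent to rational smoothness at any single point of it, so that only the cells $C_x$ with $x\in[u,w]$ matter near the point corresponding to $u$ --- one sees that this triviality is equivalent to each $P_{[x,w]}(q)$, $x\in[u,w]$, being \emph{palindromic}, i.e. $q^{\ell(w)-\ell(x)}P_{[x,w]}(q^{-1})=P_{[x,w]}(q)$. The reverse implication, that palindromicity of all these polynomials forces $P_{x,w}\equiv 1$, I would obtain by induction on $\ell(w)-\ell(x)$ from the defining recursion for Kazhdan--Lusztig polynomials.

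For the combinatorial half I would prove that, for a Bruhat interval $[u,w]$, all the $P_{[x,w]}$ with $x\in[u,w]$ are palindromic if and only if $\mathrm{deg}_{y,w}(y)=\ell(w)-\ell(y)$ for every $y\in[u,w]$. Unwinding definitions, $\mathrm{deg}_{y,w}(y)=\#\{t\in T:y<ty\le w\}$, the number of up-edges at the bottom vertex of $\Bg[y,w]$. The key preliminary is Deodhar's inequality $\mathrm{deg}_{y,w}(y)\ge\ell(w)-\ell(y)$, which I would establish by induction on $\ell(w)-\ell(y)$ using the Lifting property (Lemma~\ref{Lft}): the case $y\tr w$ is immediate, and for the inductive step one chooses $s\in D_{L}(w)$ and compares $[y,w]$ with $[sy,sw]$ when $s\in D_{L}(y)$, or with $[y,sw]$ plus the extra edge $y\tr sy$ when $s\notin D_{L}(y)$, checking that no bottom up-edge is lost. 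Granting this, one direction follows by an induction that peels off a left descent of $w$: if the degree condition holds for $[u,w]$ it holds for all the smaller intervals that appear, and a short generating-function computation then shows each $P_{[x,w]}$ is palindromic. For the other direction I would compare the coefficient of $q$ in $P_{[y,w]}$ --- the number of atoms of $[y,w]$, which is at most $\mathrm{deg}_{y,w}(y)$ --- with the coefficient of $q^{\ell(w)-\ell(y)-1}$, which palindromicity forces to equal it, and run this over every subinterval $[z,w]$ with $z\in[y,w]$; combined with Deodhar's inequality this pins $\mathrm{deg}_{y,w}(y)$ to exactly $\ell(w)-\ell(y)$.

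The step I expect to be the genuine obstacle is the geometric half, specifically the equivalence between rational smoothness at the point corresponding to $u$ and palindromicity of the interval Poincar\'e polynomials: this is the only place where intersection cohomology and local Poincar\'e duality are really needed, and it cannot be replaced by a purely combinatorial argument. The combinatorial half, by contrast, is elementary once Deodhar's inequality is in place, and the manipulations there (the two inductions peeling off left descents, and the coefficient comparison) are routine rather than conceptually difficult.
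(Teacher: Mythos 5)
The paper does not prove this statement at all: Theorem~\ref{Thc} is imported verbatim from Carrell--Peterson \cite{MR1278700} and is explicitly \emph{used as the definition} of rational smoothness in the rest of the paper, so there is no internal proof to compare your plan against. Judged on its own terms, your outline does follow the standard route from the literature: translate rational smoothness at the point corresponding to $u$ into triviality of the Kazhdan--Lusztig polynomials $P_{x,w}$ for $x\in[u,w]$, pass through palindromicity of the rank generating functions of the upper intervals $[x,w]$ (via local Poincar\'e duality and the cell decomposition, i.e.\ McCrory's criterion), and then prove a purely combinatorial equivalence between palindromicity and the degree condition using Deodhar's inequality. You also correctly isolate the intersection cohomology input as the part that cannot be made elementary.

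However, as written the combinatorial half has a real gap. Your argument for ``palindromicity $\Rightarrow$ degree condition'' compares the coefficient of $q$ in $P_{[y,w]}$ (the number of atoms of $[y,w]$) with the coefficient of $q^{\ell(w)-\ell(y)-1}$ (the number of coatoms). Palindromicity does force these to be equal, and the number of atoms is indeed at most $\mathrm{deg}_{y,w}(y)$, but these inequalities all point the wrong way: nothing in this comparison bounds $\mathrm{deg}_{y,w}(y)$ \emph{above}, and together with Deodhar's inequality $\mathrm{deg}_{y,w}(y)\ge \ell(w)-\ell(y)$ you still cannot conclude equality. The genuine content of the Carrell--Peterson argument at this point is a finer count of Bruhat-graph edges (for Weyl groups one can use the fact that the coefficient of $q$ in $P_{y,w}$ equals $\mathrm{deg}_{y,w}(y)-\ell(w)+\ell(y)$, or Dyer's and Polo's edge-counting arguments), and that ingredient is missing from your sketch. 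Similarly, the step ``palindromicity of all upper rank generating functions forces $P_{x,w}\equiv 1$'' is itself a nontrivial theorem, not a routine induction on the defining recursion; it deserves more than the one line you give it. So the proposal is a reasonable map of the known proof, but it is not yet a proof, and the atom--coatom step would fail as stated.
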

In \cite{MR1953262}, Carrell and Kuttler showed that if $W$ is a finite simply laced Coxeter group, $X(w)$ is smooth if and only if it is rationally smooth. 
For a Bruhat interval $[u,w]$ in type $A$, $X(w)$ is smooth at a point corresponding to $u$ if and only if $\mathrm{deg}_{u,w}(u ) =\ell(w)-\ell(u)$ (see \cite{MR788771} and \cite{MR1953262}).
 \begin{Remark}\label{Rem}
 By Dyer's \cite[Proposition 3.3]{MR1104786}, the isomorphism type of $[u,w]$ determines that of the Bruhat graph $\Bg[u,w]$. Hence, if $[u,w]$ is isomorphic to $[u',w']$ and $X(w)$ is rationally smooth at a point corresponding to $u$, then $X(w')$ is rationally smooth at a point corresponding to $u'$.
 \end{Remark}
  \begin{Definition}\label{Def1}
 	Let $[u,w]$ be a Bruhat interval in $W$.
 	Then $[u,w]$ is said to be \emph{smooth} (resp. \emph{rationally smooth}) if its associated Schubert variety $X(w)$ is smooth (resp. rationally smooth) at a point corresponding to $u$.
 \end{Definition}
Equivalently, $[u,w]$ is rationally smooth if $P_{u,w}(q)=1$, where $P$ denotes the Kazhdan-Lusztig polynomial. 

Recall that $ [u,w]^{\ast}$ denotes the dual of a Bruhat interval $ [u,w]$ in $(W,S)$ where $u<w$.
\begin{Proposition}\label{Prop} 
If $[u,w]^{\ast}$ is isomorphic to some lower Bruhat interval $[e,x]$, then $[u,w]$ is rationally smooth. 
\end{Proposition}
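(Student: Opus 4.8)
The plan is to dualize the hypothesis, recognize the dual of a lower interval as a ``top'' Bruhat interval of the form $[w_0'x,w_0']$, where $w_0'$ is the longest element of the relevant Weyl group, observe that such top intervals are always rationally smooth because they live in a smooth flag variety, and finally transport rational smoothness back to $[u,w]$ along the poset isomorphism by invoking Remark \ref{Rem}.

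First I would dualize: from $[u,w]^{\ast}\cong[e,x]$ one immediately gets $[u,w]\cong[e,x]^{\ast}$ as posets. Let $W'$ be the finite Weyl group in which the lower interval $[e,x]$ lives and let $w_0'$ be its longest element. Since left multiplication by $w_0'$ is an order-reversing bijection of $W'$ (a standard property of the Bruhat order), it restricts to an order-reversing bijection $[e,x]\to[w_0'x,w_0']$; equivalently $[e,x]^{\ast}\cong[w_0'x,w_0']$, and this is a genuine interval because $x>e$ forces $w_0'x<w_0'$. Hence $[u,w]\cong[w_0'x,w_0']$. The next step is to note that the top interval $[w_0'x,w_0']$ is rationally smooth, i.e.\ that $X(w_0')$ is rationally smooth at the point corresponding to $w_0'x$. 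This is immediate, since $X(w_0')=G'/\mathcal{B}'$ is the full flag variety, which is smooth and hence rationally smooth at every point. (If one prefers a combinatorial check, Theorem \ref{Thc} reduces the claim to the identity $\mathrm{deg}_{y,w_0'}(y)=\ell(w_0')-\ell(y)$ for all $y\le w_0'$, which follows from $\mathrm{deg}_{y,w_0'}(y)=\#\{t\in T':\ell(ty)>\ell(y)\}=|T'|-\ell(y)$ together with $|T'|=\ell(w_0')$.)

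Finally, since $[u,w]\cong[w_0'x,w_0']$ and the right-hand interval is rationally smooth, Remark \ref{Rem} gives that $[u,w]$ is rationally smooth as well, which is the assertion. I do not expect a serious obstacle here: the only points needing care are the two poset identifications — that $v\mapsto w_0'v$ carries $[e,x]$ onto $[w_0'x,w_0']$ reversing order, and that Remark \ref{Rem} (Dyer's result) legitimately transfers rational smoothness across an isomorphism that may change the ambient Weyl group — while the real content is just the observation that the dual of a lower interval is a top interval, and top intervals are always (rationally) smooth because flag varieties are smooth.
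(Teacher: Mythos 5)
Your argument is correct, but it follows a genuinely different route from the paper's. The paper's own proof is a direct degree count: it uses the well-known identity $\mathrm{deg}_{e,x}(x)=\ell(x)$ for a lower interval and transports it across the duality (applied to each subinterval $[y,w]$, whose dual maps onto a lower interval under the given isomorphism) to get $\mathrm{deg}_{y,w}(y)=\ell(w)-\ell(y)$ for all $y\in[u,w]$, so Theorem \ref{Thc} applies immediately. You instead convert the dual of the lower interval into an honest top interval $[w_0'x,w_0']$ via left multiplication by $w_0'$, observe that top intervals are rationally smooth (geometrically, because $X(w_0')=G'/\mathcal{B}'$ is the smooth full flag variety, or combinatorially via $\mathrm{deg}_{y,w_0'}(y)=|T'|-\ell(y)=\ell(w_0')-\ell(y)$), and then transfer along the isomorphism $[u,w]\cong[w_0'x,w_0']$ using Remark \ref{Rem}. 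One thing your detour buys: Remark \ref{Rem} is stated for poset isomorphisms, whereas the hypothesis only gives an anti-isomorphism; composing with $w_0'$ turns it into a genuine isomorphism of Bruhat intervals, so the appeal to Dyer's result is cleaner than in the paper's proof, which implicitly transports vertex degrees across a duality. The cost is that your argument needs the ambient group of $[e,x]$ to be finite (so that $w_0'$ exists), and a Weyl group if one insists on the flag-variety phrasing rather than your combinatorial fallback; the paper's one-line count $\mathrm{deg}_{e,x}(x)=\ell(x)$ needs neither assumption and hence covers a lower interval sitting in an arbitrary Coxeter system. In the context of Section \ref{Secr}, where the groups in play are (finite) Weyl groups, this caveat is harmless, and your parenthetical check using $|T'|=\ell(w_0')$ also covers finite non-crystallographic groups, so the proof stands as written.
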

\begin{proof}
It is well-known that $\Dg_{e,x}(x)=\ell(x)$. Then, 
if $[u,w]^{\ast}$ is isomorphic to some lower Bruhat interval $[e,x]$, we have that $\ell(x)=\Dg_{e,x}(x)=\mathrm{deg}_{u,w}(u)$.  So, $\mathrm{deg}_{y,w}(y)=\ell(w)-\ell(y)$ for every $y\in  [u,w]$.  
Hence $[u,w]$ is rationally smooth. 
\end{proof}
Theorem \ref{Th11} is essentially due to Brenti, Caselli, and Marietti \cite{MR2222360}. In their theorem, it is stated that if $M$ is a special matching of a \emph{lower} Bruhat interval, then $M$ maps reflections to reflections. However, their proof does not require that the interval is lower. Recently, Gaetz and Gao found an independent proof for arbitrary intervals \cite[Theorem 6.3]{GG}.
\begin{Theorem}\label{Th11}
Let $M$ be a special matching of a Bruhat interval $[u,w]$ in $W$. Let also   $x,y\in [u,w]$ be such that $x^{-1}y \in T$. Then $M(x)^{-1}M(y)\in T$.
\end{Theorem}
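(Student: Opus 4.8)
The statement to be proved is Theorem \ref{Th11}: if $M$ is a special matching of a Bruhat interval $[u,w]$, and $x,y \in [u,w]$ satisfy $x^{-1}y \in T$ (so that $x,y$ are joined by an edge in the Bruhat graph), then $M(x)^{-1}M(y) \in T$ as well. In other words, special matchings preserve the (undirected) edge set of the Bruhat graph. My plan is to reduce to the covering case and then argue by a careful case analysis on how $M$ behaves on the four elements $x$, $y$, $M(x)$, $M(y)$, using the defining property of a special matching together with the structure of reflections.

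\emph{Step 1: Reduce to an interval of small length.} Since $x^{-1}y \in T$, we have either $x < y$ or $y < x$ in Bruhat order; say $x < y$, and $\ell(y) - \ell(x) =: k$. The edge $\{x,y\}$ lies in the Bruhat graph, and I want to show $\{M(x), M(y)\}$ does too. The key is that the relevant data is concentrated in the interval $[x,y]$ (or a slightly larger interval) together with the restriction of the matching-type relations. I would first handle the base case where $x \tr y$ is a covering relation, which is essentially forced by the definition of a matching and of a special matching; in that case $M(x) \tr M(y)$ or $M(x) = y, M(y) = x$ (if $M$ matches $x$ and $y$ to each other), or various configurations where $M(x) < M(y)$ with $M(x) \tr M(y)$ again — in all of these $M(x)^{-1} M(y)$ is a reflection because covering relations always correspond to reflections.

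\emph{Step 2: Induct on $k = \ell(y) - \ell(x)$.} For the inductive step, pick $z$ with $x \tr z \le y$; actually the cleaner route is to choose $z$ with $x < z < y$ and $z^{-1}y \in T$ or $x^{-1}z \in T$, i.e. to walk from $x$ to $y$ through the Bruhat graph within $[x,y]$ — but since the claim is precisely about edges, a walk won't immediately help. Instead, I would use the standard fact (from Dyer, or from the theory underlying Theorem \ref{Thc}) that being joined by a reflection is characterized combinatorially, and combine the special-matching axiom applied to covering relations $x \tr z$ and $z \tr \cdots$ with the inductive hypothesis on shorter reflection-relations. The workhorse will be: given $x < y$ with $x^{-1}y = t \in T$, and a cover $x \tr z$, one controls $M(z)$ versus $M(x)$ by the special matching axiom, and then one needs to relate $M(z)^{-1}M(y)$ (known to be a reflection by induction, since $\ell(y) - \ell(z) < k$) to $M(x)^{-1}M(y)$. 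This forces a case split according to whether $M(x) < x$ or $x < M(x)$, and likewise for $y$, giving essentially four cases, each resolved by the lifting-property-like behaviour that special matchings are designed to mimic (as remarked after Lemma \ref{Lft}).

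\emph{The main obstacle.} The hard part is the case analysis in Step 2, specifically the cases where $M$ acts "downward" on one of $x,y$ and "upward" on the other, so that $M(x)$ and $M(y)$ straddle the original interval. Here one cannot simply read off that $M(x)^{-1}M(y) \in T$ from a single covering relation; one needs to combine several instances of the special-matching inequality $M(x) < M(y)$ (for various covers) and show that the resulting element is a reflection rather than merely a product of reflections. I would expect to invoke the fact that the set of reflections $T$ is characterized by $t^2 = e$ together with length/parity considerations, or alternatively to use a known description of special matchings on Bruhat intervals (e.g. that locally they are given by left or right multiplication, or by more exotic matchings classified in the dihedral-interval case). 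Controlling these "mixed" cases — and ensuring the induction is set up so that every sub-relation that arises genuinely has smaller length difference — is where the real content lies; the remaining cases are routine consequences of covering relations corresponding to reflections.
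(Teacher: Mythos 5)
There is a genuine gap, and you have in fact pointed at it yourself without closing it. The crux of Theorem \ref{Th11} is precisely the ``mixed'' configuration: $x\tr y$ (or more generally $x<y$ with $x^{-1}y\in T$), $M(x)\tr x$ and $y\tr M(y)$, so that $\ell(M(y))-\ell(M(x))=\ell(y)-\ell(x)+2$. The special matching axiom only gives you $M(x)<M(y)$, and an order relation with length difference $\geq 3$ is in general \emph{not} an edge of the Bruhat graph; nothing in your Step 1 or Step 2 produces the reflection $M(x)^{-1}M(y)$. Your proposed induction is also not well-founded as a mechanism: knowing $M(z)^{-1}M(y)\in T$ for some intermediate $z$ and knowing how $M(x)$ compares to $M(z)$ does not yield $M(x)^{-1}M(y)\in T$, because $T$ is not closed under the compositions that arise (a product of two reflections is rarely a reflection, and parity/involution criteria such as $t^2=e$ do not detect membership in $T$). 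So the ``routine'' part of your plan handles only the easy configurations (where $M$ matches $x$ with $y$, or where the length difference stays $1$), and the entire content of the theorem sits in the case you defer to ``a known description of special matchings.''

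That deferral is exactly where the real proof lives, and the paper itself does not reprove it: it cites Brenti--Caselli--Marietti \cite{MR2222360} (whose argument, as the paper notes, does not actually use that the interval is lower) and the independent proof of Gaetz and Gao \cite[Theorem 6.3]{GG}. Those proofs rest on substantive structure theory for special matchings --- roughly, that a special matching behaves like left or right multiplication by a simple reflection on suitable (dihedral) suborbits, which is what lets one convert the order relation $M(x)<M(y)$ in the mixed case into an actual reflection relation. If you want a self-contained argument, you would need to develop (or quote) that structural result explicitly; as written, your plan reduces the theorem to itself in the only case that matters.
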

We now have the following corollary:
\begin{Corollary}\label{Th1}
If $[u,w]$ is a zircon, then $\Dg_{u,w}(w)=\ell(w)-\ell(u)$.
\end{Corollary}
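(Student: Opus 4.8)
The plan is to use Theorem \ref{Th11} to show that a special matching of $[u,w]$ restricts to an involution on the set of edges of the Bruhat graph $\Bg[u,w]$ incident to the top vertex $w$, or more precisely pairs these edges up with edges of the same kind, so that the matching acts on the neighbourhood of $w$ in a way that preserves the covering relations with $w$ after one step. First I would recall that if $[u,w]$ is a zircon, then by Definition \ref{Zirc} the (finite) principal order ideal $\{v : v\leq w\}=[u,w]$ itself carries a special matching $M$, since $w$ is the maximum and is non-minimal. Because $w$ is the top element, for every $z\tr w$ we have $M(z)<M(w)$ unless $M(w)=z$; and since $M(w)\tr w$ or $w\tr M(w)$, and $w$ is maximal, necessarily $M(w)\tr w$, i.e. $M(w)$ is an element covered by $w$. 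Write $m:=M(w)$, so $m\tr w$.

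Next I would analyse how $M$ moves the neighbours of $w$ in the Bruhat graph. The neighbours of $w$ are exactly the $x\in[u,w]$ with $x^{-1}w\in T$, and $\Dg_{u,w}(w)$ counts them. Split this neighbour set into those $x$ with $x\tr w$ (the lower cover neighbours) and those $x$ with $x^{-1}w\in T$ but $x\ntr w$. Applying Theorem \ref{Th11} with $y=w$: for any neighbour $x$ of $w$, we get $M(x)^{-1}M(w)=M(x)^{-1}m\in T$, so $M(x)$ is a neighbour of $m$ in $\Bg[u,w]$ (or equals $m$). The key combinatorial point, which goes back to the analysis of special matchings, is that $M$ together with the special property sets up a bijection between the neighbours of $w$ and the neighbours of $m=M(w)$: the $\ell(w)-\ell(m)=1$ many covers of $w$ correspond appropriately, and more importantly one shows by the standard "diamond" argument for special matchings that $\Dg_{u,w}(w)=\Dg_{u,w}(m)+1$ — removing $w$ and matching off one edge decreases the degree by exactly one.

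With that reduction in hand, I would induct on $\ell(w)-\ell(u)$. The base case $\ell(w)-\ell(u)=1$ is immediate: $[u,w]$ has just the edge $\{u,w\}$, so $\Dg_{u,w}(w)=1=\ell(w)-\ell(u)$. For the inductive step, since $[u,w]$ is a zircon, so is every order ideal, and in particular $[u,m]$ is a zircon with $\ell(m)-\ell(u)=\ell(w)-\ell(u)-1$; by the inductive hypothesis $\Dg_{u,m}(m)=\ell(m)-\ell(u)$. The remaining subtlety is that $\Dg_{u,m}(m)$ is computed in the smaller graph $\Bg[u,m]$, whereas the degree-drop statement involved $\Dg_{u,w}(m)$ in the larger graph; but the neighbours of $m$ inside $[u,w]$ that lie above $m$ are controlled, again by Theorem \ref{Th11} applied with the matching $M$, so that $\Dg_{u,w}(m)$ and $\Dg_{u,m}(m)$ differ exactly by the count of neighbours of $m$ lying strictly above $m$, and matching these against the structure at $w$ closes the loop. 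Combining $\Dg_{u,w}(w)=\Dg_{u,w}(m)+1$ with the inductive identity then gives $\Dg_{u,w}(w)=\ell(w)-\ell(u)$.

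The main obstacle I anticipate is the bookkeeping in the degree-drop claim $\Dg_{u,w}(w)=\Dg_{u,w}(M(w))+1$, i.e. pinning down exactly how a special matching pairs the edges at the top vertex with the edges at its image vertex, including the edges that "go upward" from $M(w)$. This is where Theorem \ref{Th11} (special matchings send reflection-related pairs to reflection-related pairs) does the real work: it guarantees $M$ restricts to a well-defined map on edge-sets, and one must then check that no edge at $w$ is fixed by $M$ except the matched edge $\{M(w),w\}$, and that the induced map on the non-matched edges at $w$ is a bijection onto the edges at $M(w)$ other than the upward ones. Once this edge-level bijection is verified cleanly, the induction is routine.
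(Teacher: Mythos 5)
Your overall strategy is the same as the paper's: take a special matching $M$ of the zircon $[u,w]$ (its principal order ideal at the maximum $w$ is $[u,w]$ itself), note $M(w)\tr w$ because $w$ is maximal, use Theorem \ref{Th11} to transport Bruhat-graph edges, and induct on $\ell(w)-\ell(u)$ via the zircon $[u,M(w)]$. However, the quantitative identity you hang everything on, $\Dg_{u,w}(w)=\Dg_{u,w}(M(w))+1$, is false. Write $m=M(w)$. Since $m\tr w$, the only element of $[u,w]$ strictly above $m$ is $w$, so the correct relations are $\Dg_{u,w}(m)=\Dg_{u,m}(m)+1$ and, because Theorem \ref{Th11} together with the fact that $M$ is an involution makes $M$ a graph automorphism of $\Bg[u,w]$, $\Dg_{u,w}(w)=\Dg_{u,w}(m)$ exactly, with no ``$+1$''. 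Taken literally, your combination of $\Dg_{u,w}(w)=\Dg_{u,w}(m)+1$ with the inductive value $\Dg_{u,m}(m)=\ell(m)-\ell(u)$ and the single upward edge at $m$ would output $\ell(w)-\ell(u)+1$. The source of the slip is conflating the degree of $m$ in the full graph $\Bg[u,w]$ with its degree in the subgraph $\Bg[u,m]$; indeed your own closing edge-level description (non-matched edges at $w$ biject with the edges at $m$ other than the upward one) says $\Dg_{u,w}(w)-1=\Dg_{u,m}(m)$, which contradicts the displayed formula and is the statement you actually need.

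Once the bookkeeping is corrected, the proof closes exactly as in the paper, and more simply than the ``diamond''-style edge analysis you anticipate as the main obstacle: Theorem \ref{Th11} says $M$ sends pairs differing by a reflection to pairs differing by a reflection, so $M$ is an automorphism of $\Bg[u,w]$ and $\Dg_{u,w}(w)=\Dg_{u,w}(M(w))$ with no case-by-case check of fixed or upward edges; then
$\Dg_{u,w}(M(w))=\Dg_{u,M(w)}(M(w))+1=\bigl(\ell(w)-1-\ell(u)\bigr)+1=\ell(w)-\ell(u)$,
where the first equality holds because $M(w)\tr w$ forces $w$ to be the unique neighbour of $M(w)$ in $[u,w]$ outside $[u,M(w)]$, and the second is the induction hypothesis applied to the zircon $[u,M(w)]$ (base case $\ell(w)-\ell(u)=1$ being trivial). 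So your proposal has the right ingredients but, as written, the central counting step fails and needs this repair.
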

\begin{proof}
Let $M$ be a special matching on a zircon $[u,w]$ and assume that 
$\ell(w)-\ell(u)\geq 2$.
Then $[u,M(w)]$ is a zircon. By induction on the length, we may assume that $\mathrm{deg}_{u,M(w)}(M(w))=\ell(w)-1-\ell(u)$. Hence, $\Dg_{u,w}(M(w))=\ell(w)-\ell(u)$. By Theorem \ref{Th11}, $M$ gives a graph automorphism of $\Bg[u,w]$. Thus $\Dg_{u,w}(M(w))=\Dg_{u,w}(w)$.
\end{proof}
\begin{Corollary}\label{Cor1}
If $[u,w]^{\ast}$ is a zircon, then $[u,w]$ is rationally smooth.
\end{Corollary}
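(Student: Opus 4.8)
The plan is to reduce the statement to the Carrell--Peterson criterion (Theorem \ref{Thc}) by showing that $\mathrm{deg}_{y,w}(y) = \ell(w)-\ell(y)$ for every $y \in [u,w]$. First I would fix $y \in [u,w]$ and consider the sub-interval $[y,w]$. The key observation is that if $[u,w]^{\ast}$ is a zircon, then so is every order ideal of $[u,w]^{\ast}$; in particular $[y,w]^{\ast}$, which is an order ideal of $[u,w]^{\ast}$, is a zircon. This is exactly the remark immediately following Definition \ref{Zirc}.

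Next I would apply Corollary \ref{Th1} to the zircon $[y,w]^{\ast}$. Writing the dual carefully: in $[y,w]^{\ast}$ the minimum is $w$ and the maximum is $y$, and the degree of a vertex in the Bruhat graph is a property of the underlying (undirected) graph, so $\mathrm{deg}$ is insensitive to passing to the dual. Thus Corollary \ref{Th1}, applied with the roles of the endpoints swapped, yields that the degree of the top element $y$ of $[y,w]$ equals the "length difference" $\ell(w) - \ell(y)$, i.e. $\mathrm{deg}_{y,w}(y) = \ell(w) - \ell(y)$. The one point that needs a line of care here is matching up Corollary \ref{Th1}'s conclusion $\Dg_{u,w}(w) = \ell(w)-\ell(u)$ with what we need: Corollary \ref{Th1} is phrased for an interval $[u,w]$ that is itself a zircon and gives the degree of its \emph{top} vertex $w$; here the interval that is a zircon is $[y,w]^{\ast}$, whose top vertex is $y$, and the relevant length difference is the rank of the interval, which is $\ell(w)-\ell(y)$. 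Since $\mathrm{Bg}[y,w]$ and $\mathrm{Bg}[y,w]^{\ast}$ have the same edge set, $\mathrm{deg}_{y,w}(y)$ is unchanged, so we conclude $\mathrm{deg}_{y,w}(y) = \ell(w)-\ell(y)$.

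Finally, since this holds for every $y \in [u,w]$, Theorem \ref{Thc} gives that $X(w)$ is rationally smooth at the point corresponding to $u$, i.e. $[u,w]$ is rationally smooth in the sense of Definition \ref{Def1}.

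The main obstacle, such as it is, is purely bookkeeping: being careful that Corollary \ref{Th1} is really about the top element of whichever interval is assumed to be a zircon, and that "taking the dual" does not disturb the relevant quantity because degrees are computed in the undirected Bruhat graph. Once the dual is set up correctly, the argument is immediate from Corollary \ref{Th1} and Theorem \ref{Thc}. There is also the trivial edge case $\ell(w)-\ell(y) \le 1$ (in particular $y = w$, where both sides are $0$), which is handled directly and is consistent with the convention that interval notation presupposes strict inequality between endpoints.
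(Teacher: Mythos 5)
Your proposal is correct and takes essentially the same route as the paper: view $[y,w]^{\ast}$ as an order ideal of the zircon $[u,w]^{\ast}$, apply Corollary \ref{Th1} (in its dual form) to get $\mathrm{deg}_{y,w}(y)=\ell(w)-\ell(y)$ for every $y\in[u,w]$, and conclude by the Carrell--Peterson criterion (Theorem \ref{Thc}). The bookkeeping you spell out about the dual interval and the undirected Bruhat graph is precisely what the paper's shorter proof leaves implicit.
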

\begin{proof}
Since $[u,w]^{\ast}$ is a zircon, $[y,w]^{\ast}$ is a zircon for all $y\in [u,w]$.
So by Corollary \ref{Th1}, $\mathrm{deg}_{y,w}(y)=\ell(w)-\ell(y)$ for all $y\in [u,w]$. Hence $[u,w]$ is rationally smooth.
\end{proof}

\section{Conjectures on smooth intervals in symmetric groups} \label{sec:main}
Experiments have led us to suspect that, at least in type $A$, the converse of Corollary \ref{Cor1} holds. This is Conjecture \ref{ConjC3} below. If it holds, and conjectures by Delanoy (Conjecture \ref{Conj1} and \ref{Conj2} below) are also true, then a stronger conjecture, Conjecture \ref{ConjC4}, must be satisfied. In this section, we present these connections and show that this last conjecture implies all the others. We also present evidence in the form of computer calculations. 
\begin{Conjecture}\cite{MR2397355}\label{Conj1}
If $[u,w]$ is a zircon in type $A$, $D$ or $E$, then $[u,w]$ is isomorphic to some lower interval $[e,x]$, where $[u,w]$ and $ [e,x]$ are potentially in different types.
\end{Conjecture}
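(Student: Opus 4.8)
The plan is to factor the conjecture through rational smoothness, into two halves: \textbf{(i)} if a Bruhat interval $[u,w]$ is a zircon then $[u,w]^{\ast}$ is (rationally) smooth, and \textbf{(ii)} if a Bruhat interval is (rationally) smooth then its dual is isomorphic to a lower interval, possibly in a different type. Composing the two — applying (ii) to $[u,w]^{\ast}$ and dualizing back — yields Conjecture \ref{Conj1}. Part (i) is already within reach of the results above; part (ii) is where essentially all the difficulty lies.

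For part (i), recall that right multiplication by the longest element $w_0$ is an anti-automorphism of the Bruhat order, so $[u,w]^{\ast}$ is isomorphic as a poset to the genuine Bruhat interval $[ww_0,uw_0]$ (with $\ell(xw_0)=\ell(w_0)-\ell(x)$). Applying Corollary \ref{Cor1} with $[ww_0,uw_0]$ in place of $[u,w]$, and noting $[ww_0,uw_0]^{\ast}\cong[u,w]$, shows that a zircon $[u,w]$ has $[u,w]^{\ast}\cong[ww_0,uw_0]$ rationally smooth; equivalently, each $[u,y]\subseteq[u,w]$ is again a zircon, so Corollary \ref{Th1} gives $\Dg_{u,y}(y)=\ell(y)-\ell(u)$ for all $y$, which, via the $w_0$-twist and Remark \ref{Rem}, is exactly the Carrell--Peterson criterion of Theorem \ref{Thc} for $[ww_0,uw_0]$. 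In types $A$, $D$, $E$, rational smoothness coincides with smoothness by Carrell--Kuttler.

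For part (ii), one must show that a smooth Bruhat interval $[v,x]$ in type $A$, $D$ or $E$ has $[v,x]^{\ast}$ isomorphic to some lower interval, and here the freedom to change type is genuinely needed. The approach I would take is to exploit the structure of smooth Schubert varieties: classically, a smooth $X(x)$ in type $A$ is an iterated fibre bundle of Grassmannians, and in general a smooth $X(x)$ admits a Billey--Postnikov decomposition presenting it as a fibre bundle over a Schubert variety in a maximal-parabolic quotient (a Grassmannian-type base) with smooth fibre of smaller rank. One would try to lift such a decomposition to the poset level — proving that $[v,x]$ itself is isomorphic to a direct product of a Grassmannian-type interval with a smaller smooth interval — and then induct on rank, using that a direct product of lower intervals is again a lower interval in the product Coxeter group (this is where the change of type enters) and that the dual of a Grassmannian-type Bruhat interval is again realizable as a lower interval.

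The main obstacle is part (ii). What the literature provides about smooth Schubert varieties — a description of the variety, of its singular locus, or of its Poincar\'e polynomial — stops short of a combinatorial description of the poset $[v,x]$ at an arbitrary point $v$, and turning the fibre-bundle picture into an honest poset isomorphism, uniformly across the simply-laced types (type $A$ being by far the most accessible), is precisely the gap. Moreover, a proof of part (ii) would, via part (i), establish Conjecture \ref{Conj1} itself and also the converse of Corollary \ref{Cor1}, settling the whole circle of statements at once; this is why, rather than attacking it head-on, we isolate the computationally checkable Conjecture \ref{ConjC4}.
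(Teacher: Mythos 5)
There is a genuine gap, and it is the whole of the matter. The statement you are addressing is a \emph{conjecture} (due to Delanoy); the paper does not prove it, and neither do you. Your part (i) is fine: it is exactly the $w_0$-twist used in the paper (a zircon $[u,w]$ satisfies $[u,w]\cong[ww_0,uw_0]^{\ast}$, so Corollary \ref{Cor1} makes $[ww_0,uw_0]\cong[u,w]^{\ast}$ rationally smooth, hence smooth in the simply-laced case by Carrell--Kuttler). But your part (ii) --- every smooth interval $[v,x]$ in type $A$, $D$, $E$ has $[v,x]^{\ast}$ isomorphic to a lower interval --- is not a lemma you can invoke; it is an open statement that is at least as strong as the entire circle of conjectures in the paper. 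Indeed, (ii) immediately implies Conjecture \ref{ConjC3} (a lower interval is a zircon, so duals of smooth intervals would be zircons), and composed with your (i) it implies Conjecture \ref{Conj1} itself. So the proposed ``factorization'' does not reduce the problem to anything easier; it repackages it, and you concede as much when you call (ii) ``precisely the gap.''

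Concretely, the step that would fail is the lifting of a Billey--Postnikov/fibre-bundle decomposition of a smooth $X(x)$ to a poset-level product decomposition of the interval $[v,x]$ at an \emph{arbitrary} basepoint $v$. Such decompositions are understood for lower intervals, but there is no result giving $[v,x]\cong(\text{Grassmannian-type interval})\times(\text{smaller smooth interval})$ for general $v$, nor a proof that the dual of the relevant Grassmannian-type piece is again a lower interval; these are exactly the combinatorial statements whose absence is why the paper formulates Conjectures \ref{ConjC3} and \ref{ConjC4} rather than proving Theorem \ref{Th2} unconditionally. For comparison, the paper's stance on Conjecture \ref{Conj1} is: it is Delanoy's conjecture, Delanoy showed Conjecture \ref{Conj2} implies it, the paper shows Conjecture \ref{ConjC4} implies Conjecture \ref{Conj2} in type $A$, and Conjecture \ref{ConjC4} has been verified by computer for $A_n$, $n\leq 8$ (yielding Corollary \ref{CTh2}). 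If you want to contribute along your lines, the honest target is a proof of your part (ii) in type $A$ (even for special classes of $x$, e.g.\ when $X(x)$ is an iterated Grassmannian bundle), or a proof of Conjecture \ref{ConjC4}; as written, your text is a research plan, not a proof.
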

Recall that $D_{L}(w)$ is the left descent set of $w$.
\begin{Conjecture}\cite{MR2397355}\label{Conj2}
 Let  $[u,w]$ be a zircon in type $A$, $D$ or $E$. Then, there exists $s\in D_{L}(w)$ such that either:
 \begin{enumerate}
       \item $sw \not \geq u $, or  
       \item $su>u$.     
 \end{enumerate}
	 	
\end{Conjecture}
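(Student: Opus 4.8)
Conjecture \ref{Conj2} is really a structural lemma feeding the inductive proof of Conjecture \ref{Conj1}, so the plan is to argue by contradiction, using the zircon hypothesis on $[u,w]$ to force a left descent of $w$ with the stated property. Suppose $[u,w]$ is a zircon and that the conclusion fails; negating the disjunction and noting that $su\neq u$, this says exactly that $D_{L}(w)\subseteq D_{L}(u)$ and that $sw\in[u,w]$ for every $s\in D_{L}(w)$. I would first record the two elementary dichotomies supplied by the Lifting property (Lemma \ref{Lft}). Fix $s\in D_{L}(w)$: if $su>u$, then left multiplication by $s$ is a well-defined special matching of $[u,w]$ (the standard dihedral example of a special matching), which is precisely outcome~(2); and if $sw\not\geq u$, then every $y\in[u,w]$ has $s\in D_{L}(y)$, so left multiplication by $s$ maps $[u,w]$ order-isomorphically onto a subposet of the shorter interval $[su,sw]$, which is the ``descent'' reduction step that powers the induction toward Conjecture \ref{Conj1}. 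Thus the entire content of the statement is: a zircon interval $[u,w]$ cannot simultaneously satisfy $D_{L}(w)\subseteq D_{L}(u)$ and $sw\in[u,w]$ for all $s\in D_{L}(w)$.

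The crux, and where I expect the real difficulty to lie, is to extract a contradiction from that configuration. Let $M$ be a special matching of $[u,w]$, which exists because $[u,w]$ is a zircon, so $M(w)\tr w$; by Theorem \ref{Th11}, $M$ is an automorphism of $\Bg[u,w]$, whence $\Dg_{u,w}(M(w))=\Dg_{u,w}(w)=\ell(w)-\ell(u)$, exactly as in the proof of Corollary \ref{Th1}. The idea is to show that $M(w)$ must be of the form $sw$ with $s\in D_{L}(w)$ --- heuristically, the special matchings compatible with a ``smooth-looking'' top of the interval are the left-multiplication ones --- and then to propagate $M$ downward through the special-matching axioms together with repeated applications of Theorem \ref{Th11}, keeping track of how $M$ permutes the lower covers of $u$ inside $[u,w]$. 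The assumption $D_{L}(w)\subseteq D_{L}(u)$ should then be incompatible with a special matching existing on every principal order ideal of $[u,w]$, the obstruction surfacing as a small order ideal of odd cardinality (or otherwise non-Eulerian) sitting just above $u$.

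The principal obstacle is precisely the claim ``$M(w)=sw$ for some $s\in D_{L}(w)$'': there is no classification of special matchings of arbitrary, non-lower Bruhat intervals, so $M$ is hard to control near the top, and this is essentially why the statement remains open. A safer route would be to establish Conjecture \ref{Conj1} first, giving $[u,w]\cong[e,x]$ for some $x$ (possibly in another type), and then deduce Conjecture \ref{Conj2} from it; but even granting the abstract poset isomorphism, one must still recover the genuine group-theoretic left descents of $w$ and the behaviour of $su$ and $sw$ from poset data alone, which is not automatic. In type $A$ one might instead hope to run the argument through the pattern-avoidance description of smooth permutations --- the circle of ideas exploited by the later conjectures of this paper.
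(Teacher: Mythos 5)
The statement you have attempted is Delanoy's Conjecture \ref{Conj2}, and the paper does not prove it: it is quoted from \cite{MR2397355} as an open conjecture. The only result in this paper that touches it is conditional, namely that Conjecture \ref{ConjC4} implies Conjecture \ref{Conj2} in type $A$ (by applying Conjecture \ref{ConjC4} to the smooth interval $[ww_0,uw_0]\cong[u,w]^{\ast}$, which is smooth by Corollary \ref{Cor1}, and translating the resulting $s$ back through the anti-automorphism given by $w_0$), together with computer verification of Conjecture \ref{ConjC4} in $A_n$ for $n\leq 8$. Nothing is claimed in types $D$ or $E$, and nothing unconditional in type $A$. So there is no proof in the paper to compare yours against, and any complete argument you produced would be new mathematics rather than a reconstruction.

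Your proposal, as you yourself concede, is not a proof. The opening reduction is fine: negating the conclusion does give $D_{L}(w)\subseteq D_{L}(u)$ together with $sw\geq u$ for every $s\in D_{L}(w)$, and the two lifting-property dichotomies you record are correct (if $su>u$ for some $s\in D_{L}(w)$ then left multiplication by $s$ is a special matching by \cite[Proposition 4.1]{MR2095476}; if $sw\not\geq u$ then $s$ is a descent of every element of $[u,w]$ and one gets the reduction $[u,w]\cong[su,sw]$). But the whole content of the conjecture lies in the step you flag as the crux and leave unargued: showing that a special matching $M$ of $[u,w]$ must satisfy $M(w)=sw$ for some $s\in D_{L}(w)$, or otherwise deriving a contradiction from the bad configuration. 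Theorem \ref{Th11} only makes $M$ an automorphism of $\Bg[u,w]$, which yields degree identities as in Corollary \ref{Th1}; it gives no control identifying $M$ near the top with a left multiplication, and special matchings of non-lower intervals are not classified --- which is precisely why the statement is open. Your fallback of first proving Conjecture \ref{Conj1} runs against the known logical direction (Delanoy shows Conjecture \ref{Conj2} implies Conjecture \ref{Conj1}, not conversely) and is itself unproven. If you want a statement you can actually establish, the honest target is the paper's conditional corollary deducing Conjecture \ref{Conj2} in type $A$ from Conjecture \ref{ConjC4}.
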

In fact, by Delanoy \cite{MR2397355}, Conjecture \ref{Conj2} implies Conjecture \ref{Conj1}.
Now, consider the following proposition which summarizes what we found in the previous section.  
 \begin{Proposition}\label{Pim}
 Let $W$ be a Weyl group. Then:
 \begin{enumerate}
    \item If $[u,w]^{\ast}$ is isomorphic to some lower interval $[e,x]$, possibly from a different system $(W',S')$, then $[u,w]^{\ast}$ is a zircon, \label{imp1} 
     \item If $[u,w]^{\ast}$ is isomorphic to some lower interval $[e,x]$, possibly from a different system $(W',S')$, then $[u,w]$ is rationally smooth, \label{imp2}
      \item If $[u,w]^{\ast}$ is a zircon, then $[u,w]$ is rationally smooth. \label{imp3}
\end{enumerate}
\end{Proposition}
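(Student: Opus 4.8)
The plan is to observe that Proposition \ref{Pim} is purely a repackaging of results already established in the excerpt, so the "proof" consists of citing the appropriate earlier statements in the right order. First I would dispose of part \eqref{imp1}: if $[u,w]^{\ast}$ is isomorphic to a lower interval $[e,x]$ in some Coxeter system $(W',S')$, then since every lower Bruhat interval is a zircon (this is the direct consequence of the definition recalled in Section \ref{Sec1}, via the Lifting property and multiplication by left descents), $[e,x]$ is a zircon; being a zircon is an isomorphism-invariant property of posets, hence $[u,w]^{\ast}$ is a zircon as well. Note here that the appeal to "possibly from a different system" is harmless: a lower interval $[e,x]$ in \emph{any} Coxeter group is a zircon, and Definition \ref{Zirc} only refers to the poset structure.

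Next, for part \eqref{imp2}, I would simply invoke Proposition \ref{Prop}, which already states precisely that if $[u,w]^{\ast}$ is isomorphic to some lower Bruhat interval $[e,x]$ then $[u,w]$ is rationally smooth. The only point worth a sentence is that Proposition \ref{Prop} was phrased without explicitly allowing $[e,x]$ to live in a different Coxeter system, but its proof only uses the poset isomorphism together with the universal identity $\Dg_{e,x}(x) = \ell(x)$ (valid in any Weyl group) and Dyer's result from Remark \ref{Rem} that the isomorphism type of an interval determines that of its Bruhat graph; so the argument goes through verbatim for $[e,x]$ in $(W',S')$, provided $W'$ is taken to be a Weyl group as well. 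Finally, part \eqref{imp3} is exactly the content of Corollary \ref{Cor1}, and needs no further argument.

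I do not expect any genuine obstacle here, since all three implications have already been proved; the one place that requires a little care is making sure the phrase "possibly from a different system $(W',S')$" is legitimate in parts \eqref{imp1} and \eqref{imp2}. For \eqref{imp1} this is automatic because zirconhood is a property of the abstract poset. For \eqref{imp2}, one should either restrict $(W',S')$ to be a Weyl group (so that rational smoothness and the degree formula make sense on the $[e,x]$ side, although in fact only the universal equality $\Dg_{e,x}(x)=\ell(x)$ is used, which holds in every Coxeter group) or, more cleanly, note that the proof of Proposition \ref{Prop} transfers the numerical degree data through the poset isomorphism and Dyer's theorem without ever needing to interpret $[e,x]$ geometrically. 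With that remark in place, the proof of Proposition \ref{Pim} is a three-line citation: \eqref{imp1} from the definition of zircon plus isomorphism-invariance, \eqref{imp2} from Proposition \ref{Prop}, and \eqref{imp3} from Corollary \ref{Cor1}.
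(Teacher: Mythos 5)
Your proposal is correct and matches the paper's own proof, which likewise dispatches part (\ref{imp1}) as immediate (lower intervals are zircons and zirconhood is a poset-isomorphism invariant), cites Proposition \ref{Prop} for part (\ref{imp2}), and cites Corollary \ref{Cor1} for part (\ref{imp3}). Your extra remarks justifying the phrase ``possibly from a different system'' are a harmless elaboration of the same argument.
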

\begin{proof}
The implication (\ref{imp1}) is clear, while the  implication (\ref{imp2}) follows from Proposition \ref{Prop}, 
and the implication (\ref{imp3}) follows from Corollary \ref{Cor1}.
\end{proof}
Investigating the possible converses of these implications in the case of symmetric groups has led us to believe in the following conjecture. 
\begin{Conjecture}\label{ConjC3}
If $[u,w]$ is smooth in type $A_{n}$, then $[u,w]^{\ast}$ is a zircon.
\end{Conjecture}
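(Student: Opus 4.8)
We sketch a possible approach to Conjecture \ref{ConjC3}. By Definition \ref{Zirc}, $[u,w]^{\ast}$ is a zircon exactly when, for every non-minimal element $x$ of $[u,w]^{\ast}$ --- that is, for every $x \in [u,w)$ --- the principal order ideal $\{z \in [u,w]^{\ast} : z \le x\}$, which is precisely $[x,w]^{\ast}$, has a special matching. Two elementary reductions simplify this. First, a special matching of a poset $P$ is automatically one of $P^{\ast}$: the requirement that $M(z)$ cover or be covered by $z$ is self-dual, and applying the defining inequality of $M$ on $P$ to a covering $b \tr a$ gives $M(b) = a$ or $M(b) < M(a)$, hence $M(a) = b$ or $M(a) > M(b)$ in $P$, i.e. $M(a) = b$ or $M(a) < M(b)$ in $P^{\ast}$. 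Thus $[x,w]^{\ast}$ has a special matching if and only if $[x,w]$ does. Second, by Theorem \ref{Thc}, rational smoothness of $[u,w]$ means $\Dg_{y,w}(y) = \ell(w) - \ell(y)$ for all $y \in [u,w]$; since $\Dg_{y,w}(y)$ depends only on the pair $(y,w)$, this identity then holds for all $y \in [x,w] \subseteq [u,w]$, so $[x,w]$ is rationally smooth for every $x \in [u,w]$. As rational smoothness and smoothness coincide in type $A$, Conjecture \ref{ConjC3} is equivalent to the statement that every smooth Bruhat interval $[v,w]$ in type $A$ with $v < w$ admits a special matching.

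I would dispose of the easy cases first. If some $s \in D_{L}(w)$ has $sv > v$, then by the Lifting property (Lemma \ref{Lft}) left multiplication by $s$ is an involution of $[v,w]$ changing every length by one, and the same case analysis from Lemma \ref{Lft} that establishes the lower-interval statement recalled in Section \ref{Sect} shows it is a special matching of $[v,w]$; symmetrically for a right descent $s \in D_{R}(w) := \{s \in S : \ell(ws) < \ell(w)\}$ with $vs > v$. In particular the assertion is automatic whenever $\ell(w) - \ell(v) = 1$. What is left are the intervals with $\ell(w) - \ell(v) \ge 2$ for which $D_{L}(w) \subseteq D_{L}(v)$ and $D_{R}(w) \subseteq D_{R}(v)$.

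The heart of the plan is to prove that this configuration never arises for a smooth interval in type $A$: if $[v,w]$ is smooth in type $A$ and $\ell(w) - \ell(v) \ge 2$, then $D_{L}(w) \not\subseteq D_{L}(v)$ or $D_{R}(w) \not\subseteq D_{R}(v)$. I would argue by contradiction, assuming both inclusions: in one-line notation, every left and every right descent of $w$ is then a descent of $v$, yet $v < w$ and $\ell(w) \ge \ell(v) + 2$. Using the pattern characterization of smoothness for Bruhat intervals in type $A$ together with the degree criterion (Theorem \ref{Thc}), the aim is to show $\Dg_{v,w}(v) > \ell(w) - \ell(v)$ --- equivalently, to locate an interval-pattern occurrence of $3412$ or $4231$ inside $[v,w]$ --- contradicting smoothness. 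A natural way to organize this is induction on $n$, using the iterated fiber-bundle (Billey--Postnikov) structure of smooth Schubert varieties in type $A$: such an interval splits, relative to a maximal parabolic, as a Grassmannian interval times a shorter smooth interval, and the two descent inclusions should force the Grassmannian factor to contribute only $\ell$-difference $1$, concentrating the remaining length in the shorter factor, where induction applies.

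I expect essentially all the difficulty to lie in this last step, namely in controlling the left and right descent sets of the two endpoints of a smooth interval simultaneously. Descent-multiplication matchings do not suffice on their own --- for instance $[1324,3412]$ in $S_{4}$ satisfies both inclusions with $\ell$-difference $3$ but is singular --- so smoothness must be used globally, presumably through the full combinatorial classification of rationally smooth intervals in type $A$; even the special case $v = e$, asserting that $[e,w]^{\ast}$ is a zircon whenever $X(w)$ is smooth in type $A$, already appears to require this input.
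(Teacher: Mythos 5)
Your opening reductions are correct and coincide with how the paper itself organizes things: special matchings are self-dual, every $[x,w]$ with $x\in[u,w]$ inherits (rational) smoothness by Theorem \ref{Thc}, so the conjecture is equivalent to every smooth interval in type $A$ admitting a special matching (this is exactly the content of Proposition \ref{Cl1} together with Corollary \ref{C:c}). But be aware that the statement you are proving is one of the paper's \emph{conjectures}: the paper gives no proof, only the implication Conjecture \ref{ConjC4} $\Rightarrow$ Conjecture \ref{ConjC3} and computer verification of Conjecture \ref{ConjC4} for $n\leq 8$. So what you have written is a reduction plus a research plan, not a proof --- and, more seriously, the plan hinges on a claim that is false.

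The claim at the heart of your plan --- that a smooth interval $[v,w]$ in type $A$ with $\ell(w)-\ell(v)\geq 2$ must have some left or right descent of $w$ that is an ascent of $v$ --- has small counterexamples. Take $v=s_1s_4=21354$ and $w=s_1s_2s_3s_4=23451$ in $S_5$. Then $w$ avoids $3412$ and $4231$, so $X(w)$ is smooth everywhere and $[v,w]$ is a smooth interval with $\ell(w)-\ell(v)=2$; yet $D_L(w)=\{s_1\}\subseteq D_L(v)=\{s_1,s_4\}$ and the right descent sets satisfy $\{s_4\}\subseteq\{s_1,s_4\}$, so no multiplication matching (left or right) is available. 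The same pattern scales: $v=s_1s_5$, $w=s_1s_2s_3s_4s_5$ in $S_6$ gives length difference $3$, so this is not an artifact of short intervals. This is precisely why the paper's Conjecture \ref{ConjC4} carries the second alternative $su\not\leq w$: in the example, $s_2v=31254\not\leq w$, and the paper's Proposition \ref{Cl1} handles such intervals not by finding a matching of $[v,w]$ directly but via the isomorphism $[v,w]\cong[s_2v,s_2w]$ (du Cloux, Delanoy) and induction \emph{upward} on $\ell(w)$. Your strategy of ruling out the ``bad configuration'' by locating a $3412$ or $4231$ interval pattern therefore cannot succeed; any viable approach must incorporate these lifting isomorphisms (or some other source of special matchings beyond multiplication), which is exactly the gap that the paper leaves open in the form of Conjecture \ref{ConjC4}.
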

The truth of this conjecture would, by Proposition \ref{Pim}, imply that smooth Bruhat intervals in type $A$ are the same as duals of zircons. Delanoy's Conjecture \ref{Conj2} would then imply the next conjecture which, as we shall see, itself implies Conjecture \ref{ConjC3} and is suitable for computer testing.
\begin{Conjecture}\label{ConjC4}
 If $[u,w]$ is smooth in type $A_n$, then there exists $s\in S\setminus D_{L}(u)$ such that either :
\begin{enumerate}
	\item $su\not \leq w$, \text{or}\label{Sta1}
	\item $sw<w$. \label{Sta2} 
\end{enumerate}
\end{Conjecture}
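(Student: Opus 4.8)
Although we leave Conjecture~\ref{ConjC4} open, here is the line of attack we find most promising.

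The plan is to argue by contraposition and first strip off an easy case. Suppose the conclusion of Conjecture~\ref{ConjC4} fails, so that every $s\in S\setminus D_{L}(u)$ satisfies \emph{both} $su\le w$ \emph{and} $sw\ge w$. The condition ``$sw\ge w$ for all $s\notin D_{L}(u)$'' says precisely that $D_{L}(w)\subseteq D_{L}(u)$: otherwise some $s\in D_{L}(w)\setminus D_{L}(u)$ would furnish $s\in S\setminus D_{L}(u)$ with $sw<w$, i.e.\ option~(\ref{Sta2}). For $s\in D_{L}(u)$ we have $su<u\le w$, so under the failed conclusion $su\le w$ holds for \emph{every} $s\in S$; also $u<w$ forces $u\ne w_{0}$, so $S\setminus D_{L}(u)\ne\emptyset$ and nothing is vacuous. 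Hence Conjecture~\ref{ConjC4} is equivalent to the statement: \emph{if $[u,w]$ is a Bruhat interval in a symmetric group with $D_{L}(w)\subseteq D_{L}(u)$ and $su\le w$ for all $s\in S$, then $[u,w]$ is not smooth.}

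To contradict smoothness we would use Theorem~\ref{Thc} and its type~$A$ refinement: $[u,w]$ is smooth iff $\Dg_{u,w}(u)=\ell(w)-\ell(u)$, and since $u$ is the minimum of the interval, $\Dg_{u,w}(u)=\#\{t\in T:tu>u,\ tu\le w\}$. Using $|T|=\ell(w_{0})$ and $\#\{t\in T:tu>u\}=\ell(w_{0})-\ell(u)$, smoothness is equivalent to $\#\{t\in T:tu>u,\ tu\not\le w\}=\ell(w_{0})-\ell(w)$, and the hypothesis ``$su\le w$ for all $s\in S$'' says that \emph{none} of these ``missing'' upward neighbours of $u$ is simple. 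The target is therefore to show that, under $u\le w$ and $D_{L}(w)\subseteq D_{L}(u)$, one cannot have exactly $\ell(w_{0})-\ell(w)$ non-simple reflections $t$ with $tu>u$, $tu\not\le w$ and no simple ones --- equivalently, that the Bruhat graph of $u$ inside $[u,w]$ is then strictly too large to be regular of degree $\ell(w)-\ell(u)$. Smoothness is essential here: the interval $[1324,3412]$ in $S_{4}$ (one-line notation) satisfies $D_{L}(w)=D_{L}(u)=\{s_{2}\}$ and $su\le w$ for all $s$, yet $\Dg_{u,w}(u)=4>3=\ell(w)-\ell(u)$, consistent with $w=3412$ being a Lakshmibai--Sandhya pattern.

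We see two complementary attacks. The first is pattern-theoretic: rational smoothness of a Bruhat interval in type~$A$ is governed by interval-pattern embeddings, and the example above suggests that the reduced hypothesis forces an embedded copy of a singular interval pattern (of $4231$- or $3412$-type). One would translate ``$D_{L}(w)\subseteq D_{L}(u)$ and $su\le w$ for all $s$'' into the rank-matrix (tableau) inequalities defining $u\le w$ in the symmetric group, and read off such an embedding from the one-line notations of $u$ and $w$; pinning down the precise forbidden configuration and verifying the rank inequalities at the relevant positions is the technical heart of this route. The second is a descent: pick $s\in D_{L}(w)\subseteq D_{L}(u)$ (possible since $w\ne e$), and pass to $[su,sw]$; by the lifting property $su\le sw$, and by the Kazhdan--Lusztig recursion $P_{su,sw}=P_{u,w}$ when $s\in D_{L}(u)\cap D_{L}(w)$, so $[su,sw]$ is again smooth with $\ell(su)=\ell(u)-1$. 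If the two hypotheses persist under this step --- $D_{L}(sw)\subseteq D_{L}(su)$ and $r(su)\le sw$ for all $r\in S$ --- then iterating drives $\ell$ of the bottom element down without bound, a contradiction; so Conjecture~\ref{ConjC4} reduces to a single persistence lemma. That lemma genuinely needs smoothness: descending $[1324,3412]$ by $s_{2}$ lands on $[e,2413]$, where $D_{L}(2413)=\{s_{1},s_{3}\}\not\subseteq D_{L}(e)=\emptyset$, so persistence fails without the smoothness hypothesis.

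The main obstacle, on either route, is controlling left descent sets. In the descent approach the delicate point is $D_{L}(sw)\subseteq D_{L}(su)$: there is no elementary reason this should follow from $s\in D_{L}(u)\cap D_{L}(w)$ alone, so it must be derived by combining smoothness of $[u,w]$ (via the degree equalities of Theorem~\ref{Thc} at all elements of $[u,w]$) with $D_{L}(w)\subseteq D_{L}(u)$ and ``$su\le w$ for all $s$'', and isolating exactly how smoothness enters there is what we expect to be hard. In the pattern approach, the analogous difficulty is deciding \emph{which} positions of the one-line notations carry the forbidden pattern and proving the rank inequalities there. We would attempt the descent route first, since it reduces everything to one clean lemma, but we suspect that proving that lemma will ultimately require the type~$A$ pattern machinery, or else an explicit injection producing the extra reflections $t$ with $tu>u$, $tu\le w$ that make the Bruhat graph too large. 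The computational verification for $A_{n}$, $n\le 8$ recorded below supports the plausibility of both routes.
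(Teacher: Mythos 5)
You are addressing Conjecture \ref{ConjC4}, which the paper does not prove: its status there is that of an open conjecture, supported only by SageMath verification in types $A_n$, $n\leq 8$, and by the consequences derived from it (Proposition \ref{Cl1} and the subsequent corollaries). Since you explicitly do not claim a proof either, there is no discrepancy of status; what can be assessed is the soundness of your reductions. These are largely correct: failure of the conclusion for $[u,w]$ is indeed equivalent to having $D_{L}(w)\subseteq D_{L}(u)$ together with $su\leq w$ for all $s\in S$, and, using Theorem \ref{Thc}, $|T|=\ell(w_0)$ and $\#\{t\in T:\, tu>u\}=\ell(w_0)-\ell(u)$, smoothness of $[u,w]$ in type $A$ is equivalent to $\#\{t\in T:\, tu>u,\ tu\not\leq w\}=\ell(w_0)-\ell(w)$. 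Your example $[1324,3412]$ and its numerology also check out.

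However, one step of the descent route is wrong as written. The Kazhdan--Lusztig recursion gives $P_{u,w}=P_{su,w}$ whenever $sw<w$ (the top element is unchanged); it does not give $P_{su,sw}=P_{u,w}$ for $s\in D_{L}(u)\cap D_{L}(w)$, and that identity is false: in your own example $P_{1324,3412}=1+q$ while $P_{e,2413}=1$, although $s_2$ is a common left descent. Moreover the step you actually need, that smoothness of $[u,w]$ together with $s\in D_{L}(u)\cap D_{L}(w)$ forces $[su,sw]$ to be smooth, fails in that generality: in $A_3$ the interval $[2314,3421]$ is smooth (since $3421$ avoids $3412$ and $4231$) and $s_1$ is a common left descent of $2314$ and $3421$, yet $s_1\cdot 2314=1324$, $s_1\cdot 3421=3412$, and $[1324,3412]$ is not smooth. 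So the smoothness of $[su,sw]$ can at best hold under your full reduced hypotheses ($D_{L}(w)\subseteq D_{L}(u)$ and $ru\leq w$ for all $r\in S$) and is itself an unproven lemma, on the same footing as your ``persistence lemma''; note that in the counterexample $D_{L}(3421)=\{s_1,s_2\}\not\subseteq\{s_1\}=D_{L}(2314)$, so it does not contradict Conjecture \ref{ConjC4}, but it does show the claimed justification via the recursion cannot work. Both of your routes therefore terminate in open statements, which is consistent with the conjectural status of the result here, but the descent route needs this correction before it can be pursued.
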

With the aid of a computer (specifically by using SageMath), we have verified Conjecture \ref{ConjC4} for  $n\leq 8$. 
Note that Conjecture \ref{ConjC4} is false in multiply laced types. As a counterexample, the interval $[s_1,s_1s_2s_1]$ is smooth in type $B_2$ (that is, $m(s_1,s_2)=4$), however there is no $s\in S$ that satisfies any of the conditions from Conjecture \ref{ConjC4}. We have no evidence for or against Conjecture \ref{ConjC4} in types $D$ and $E$.

\begin{Proposition}\label{Cl1}
 Conjecture \ref{ConjC4} implies that every smooth interval $[u,w]$ in type $A_{n}$ has a special matching. 
\end{Proposition}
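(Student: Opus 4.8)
The plan is to produce a special matching from whichever clause of Conjecture \ref{ConjC4} holds, after two reductions. If $\ell(w)-\ell(u)=1$, then $[u,w]$ is a two-element chain and its unique matching is special, so we may assume $\ell(w)-\ell(u)\ge 2$. Next I would record the elementary fact, valid for arbitrary Bruhat intervals, that if $s\in D_L(w)\setminus D_L(u)$ then $x\mapsto sx$ is a special matching of $[u,w]$: it maps $[u,w]$ into itself by the Lifting property (Lemma \ref{Lft})---if $sx>x$ and $x<w$ then $sx\le w$ because $s\in D_L(w)\setminus D_L(x)$, while if $sx<x$ then $u\le sx$ because $s\in D_L(x)\setminus D_L(u)$---it is visibly an involution and a matching, and its specialness follows from Lemma \ref{Lft} by the argument familiar from lower intervals (\cite{brenti_1}, \cite{MR1742435}), the hypothesis $s\notin D_L(u)$ serving only to keep everything above $u$. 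By the symmetry under the order-automorphism $x\mapsto x^{-1}$, which interchanges $D_L$ and $D_R$, the same holds for $x\mapsto xs$ when $s\in D_R(w)\setminus D_R(u)$.

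Now apply Conjecture \ref{ConjC4} to the smooth interval $[u,w]$ of type $A_n$: there is $s\in S\setminus D_L(u)$ with $sw<w$ or $su\not\le w$. If $sw<w$ then $s\in D_L(w)\setminus D_L(u)$ and the fact above produces a special matching, so clause (\ref{Sta2}) is settled. Assume then clause (\ref{Sta1}): $s\notin D_L(u)$, $su\not\le w$, and (otherwise we are back in the previous case) $s\notin D_L(w)$. The interval $[u^{-1},w^{-1}]$ is again smooth and of type $A_n$, so Conjecture \ref{ConjC4} applies to it too; its conclusion, translated back via $x\mapsto x^{-1}$, reads: there is $s'\in S\setminus D_R(u)$ with $ws'<w$ or $us'\not\le w$. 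In the first case $s'\in D_R(w)\setminus D_R(u)$ and right multiplication by $s'$ is the matching we want. The proof therefore comes down to excluding the ``doubly extremal'' configuration: $[u,w]$ smooth, $\ell(w)-\ell(u)\ge 2$, and simple reflections $s,s'$ with $s\notin D_L(u)\cup D_L(w)$, $su\not\le w$, $s'\notin D_R(u)\cup D_R(w)$, $us'\not\le w$.

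This exclusion is the main obstacle. The approach is to play the Carrell--Peterson degree criterion (Theorem \ref{Thc}) against the parabolic structure: the hypotheses say that $u$ and $w$ are both minimal in their left $\langle s\rangle$-coset and in their right $\langle s'\rangle$-coset, and that the two Bruhat-graph edges $u\lessdot su$ and $u\lessdot us'$ at $u$ escape $[u,w]$. Projecting $[u,w]$ onto the quotient ${}^{\{s\}}W$ exhibits it as the interval $[u,w]\cap{}^{\{s\}}W$ with a restricted ``doubling'' in the $s$-direction that in particular omits $u$, and symmetrically on the right; my expectation is that the two omissions, together with $\ell(w)-\ell(u)\ge 2$, force $\Dg_{u,w}(u)<\ell(w)-\ell(u)$, contradicting smoothness. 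Making this into a genuine argument---pinning down exactly which edges at $u$ survive both projections and checking that too few remain---is the delicate point and is where I expect the real work to lie.
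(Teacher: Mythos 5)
Your first two paragraphs are fine: the two-element case, the fact that left multiplication by $s\in D_L(w)\setminus D_L(u)$ is a special matching of $[u,w]$ (this is Brenti's result, cited in the paper as \cite[Proposition 4.1]{MR2095476}), and the right-handed analogue via $x\mapsto x^{-1}$ are all correct, and they dispose of clause (\ref{Sta2}) exactly as the paper does. The fatal problem is the last step: the ``doubly extremal'' configuration you hope to exclude actually occurs, so no argument can rule it out. Take $u=21354$ and $w=23451$ in $S_5$ (type $A_4$). Then $u\leq w$, $\ell(w)-\ell(u)=2$, and $w$ avoids $3412$ and $4231$, so $[u,w]$ is smooth (one checks directly that $\mathrm{deg}_{u,w}(u)=2$). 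But $D_L(w)=\{s_1\}\subseteq D_L(u)=\{s_1,s_4\}$ and $D_R(w)=\{s_4\}\subseteq D_R(u)=\{s_1,s_4\}$, so clause (\ref{Sta2}) is unavailable on either side; indeed $s=s_2$ gives $s_2u=31254\not\leq w$ and $s'=s_3$ gives $us_3=21534\not\leq w$, which is precisely your forbidden configuration. Consequently no multiplication (left or right, by any simple reflection) yields the desired matching here, and since multiplication matchings are the only ones your argument constructs, the proof cannot be completed along these lines. The underlying point is that special matchings of $[u,w]$ need not be of multiplication type.

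The paper's proof handles clause (\ref{Sta1}) by a different mechanism: if $su\not\leq w$ (which forces $sw>w$ by the Lifting property), then $[u,w]\cong[su,sw]$ by \cite[Proposition 2.7]{MR1742435} and \cite[Proposition B.4(i)]{MR2397355}; by Remark \ref{Rem} the interval $[su,sw]$ is again smooth, it has no special matching if $[u,w]$ has none, and its top element is strictly longer. Choosing a purported counterexample with $\ell(w)$ maximal then yields a contradiction. In the example above this is exactly what happens: the special matching of $[21354,23451]$ is obtained by transporting a multiplication matching of a higher, isomorphic interval back along the isomorphism, and it is not itself given by multiplication. If you want to salvage your approach, you need this isomorphism-and-induction step (or some other source of non-multiplication matchings); the parabolic/degree argument you sketch cannot succeed because the configuration it targets is realized by genuinely smooth intervals.
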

\begin{proof}
Assume that $[u,w]$ is a smooth Bruhat interval but does not have a special matching. Choose $w$ such that $\ell(w)$ is maximal with this property. Since there exists $s\in S$ satisfying condition (\ref{Sta1}) or (\ref{Sta2}), we have two options: Firstly, if condition  (\ref{Sta2}) holds, left multiplication by $s$ is a special matching (see \cite  [Proposition 4.1]{MR2095476}), which is a contradiction. Secondly, if condition (\ref{Sta1}) holds, then by \cite[Proposition 2.7]{MR1742435} and \cite[Proposition B.4(i)]{MR2397355} $[u,w]\cong [su,sw]$. Hence, by Remark \ref{Rem}, $[su,sw]$ is smooth but does not have a special matching. This is a contradiction since, by the lifting property (Lemma \ref{Lft}), $\ell(sw)> \ell(w)$. 
\end{proof}
\begin{Corollary}\label{C:c}
Conjecture \ref{ConjC4} implies Conjecture \ref{ConjC3}.
\end{Corollary}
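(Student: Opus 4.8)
The plan is to apply the argument of Proposition~\ref{Cl1} to every upper subinterval of $[u,w]$ simultaneously, once we verify that the special matchings it produces are special for the \emph{dual} order as well. First I would unwind the zircon condition for $[u,w]^{\ast}$. Its minimum is $w$, and for a non-minimal element $x$ of $[u,w]^{\ast}$, i.e.\ for $x\in[u,w)$, the principal order ideal of $x$ in $[u,w]^{\ast}$ equals $[x,w]$ with the order reversed, namely $[x,w]^{\ast}$. So $[u,w]^{\ast}$ is a zircon if and only if $[x,w]^{\ast}$ admits a special matching for every $x\in[u,w)$. Moreover each such $[x,w]$ is again smooth: by the Carrell--Peterson criterion (Theorem~\ref{Thc}), smoothness of $[u,w]$ means $\mathrm{deg}_{y,w}(y)=\ell(w)-\ell(y)$ for all $y\in[u,w]$, and restricting this to $[x,w]\subseteq[u,w]$ gives smoothness of $[x,w]$ (the same reduction used for Corollary~\ref{Cor1}). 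Hence it suffices to prove: \emph{assuming Conjecture~\ref{ConjC4}, for every smooth interval $[v,w]$ in type $A_n$ the dual $[v,w]^{\ast}$ admits a special matching.}

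For this I would mimic the proof of Proposition~\ref{Cl1}. Suppose the statement fails and, among all smooth intervals $[v,w]$ in type $A_n$ for which $[v,w]^{\ast}$ has no special matching, choose one with $\ell(w)$ maximal (possible since type $A_n$ is finite). By Conjecture~\ref{ConjC4} there is $s\in S\setminus D_{L}(v)$ with $sv\not\le w$ or $sw<w$. If $sw<w$, then $s\in D_{L}(w)\setminus D_{L}(v)$, so by \cite[Proposition~4.1]{MR2095476} left multiplication by $s$ is a special matching of $[v,w]$; by the lemma below it is then also a special matching of $[v,w]^{\ast}$, a contradiction. If instead $sv\not\le w$, then $[v,w]\cong[sv,sw]$ by \cite[Proposition~2.7]{MR1742435} and \cite[Proposition~B.4(i)]{MR2397355}, with $[sv,sw]$ smooth by Remark~\ref{Rem} and $\ell(sw)>\ell(w)$ by the lifting property (Lemma~\ref{Lft}), exactly as in the proof of Proposition~\ref{Cl1}. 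Maximality of $\ell(w)$ then produces a special matching of $[sv,sw]^{\ast}$, which transports along the induced isomorphism $[v,w]^{\ast}\cong[sv,sw]^{\ast}$ to one of $[v,w]^{\ast}$, again a contradiction.

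It remains to prove the lemma: if left multiplication by $s$ is a special matching $M$ of a Bruhat interval $[v,w]$, then $M$ is a special matching of $[v,w]^{\ast}$ as well. Being a matching is a self-dual condition, because $M$ is an involution and $M(p)=q$ is equivalent to $M(q)=p$; so only specialness needs to be checked. Unwound into the order of $[v,w]$, specialness of $M$ for $[v,w]^{\ast}$ is the assertion that whenever $q\tr p$ in $[v,w]$ one has $sp=q$ or $sq<sp$. But specialness of $M$ for $[v,w]$, applied to the very same cover $q\tr p$, already gives $sq=p$ or $sq<sp$, and $sq=p$ forces $sp=q$. Hence $M$ is special for $[v,w]^{\ast}$.

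I expect this duality check for left multiplications to be the only genuinely new step: stability of $[v,w]$ under left multiplication, the isomorphism $[v,w]\cong[sv,sw]$ in the second case, and the inequality $\ell(sw)>\ell(w)$ all already occur in the proof of Proposition~\ref{Cl1}. The check cannot be skipped, since Proposition~\ref{Cl1} applied to the subintervals $[x,w]$ only yields special matchings of the $[x,w]$ themselves, whereas the zircon property of $[u,w]^{\ast}$ requires special matchings of the duals $[x,w]^{\ast}$; in general the zircon property is not preserved under passing to the dual (for instance $[e,4231]$ is a zircon but $[e,4231]^{\ast}$ is not, by Corollary~\ref{Cor1}), so the explicit left-multiplication form of the matchings produced by Proposition~\ref{Cl1} is essential.
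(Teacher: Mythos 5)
Your proof is correct, and its skeleton is the same as the paper's: reduce the zircon condition for $[u,w]^{\ast}$ to producing a special matching of $[y,w]^{\ast}$ for each $u\leq y<w$, observe that each $[y,w]$ is smooth by the Carrell--Peterson criterion, and then bring in Conjecture \ref{ConjC4}. Where you diverge is that you re-run the induction of Proposition \ref{Cl1} with dual bookkeeping so as to remember that the matchings produced are left multiplications, whereas the paper simply invokes Proposition \ref{Cl1} and passes to duals directly. That shortcut is legitimate, and your closing claim that it cannot be taken is the one mistaken point: for \emph{any} matching $M$ of a poset $P$, specialness on $P$ and on $P^{\ast}$ are the same condition, because for a cover $x\triangleleft y$ the dualized requirement reads ``$M(y)=x$ or $M(x)<M(y)$'', and $M(y)=x$ is equivalent to $M(x)=y$ since $M$ is an involution. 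Indeed, your own lemma proof is exactly this argument---it never uses that $M(x)=sx$---so it actually establishes the general self-duality statement and thereby makes the tracking of the left-multiplication form, and hence the whole re-derivation of Proposition \ref{Cl1}, unnecessary. The example $[e,4231]$ you cite only shows that the zircon property, which quantifies over all principal order ideals, is not self-dual; it does not bear on the self-duality of possessing a single special matching, which is all that the paper's step ``hence $[y,w]^{\ast}$ has a special matching too'' requires.
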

\begin{proof}
Let $[u,w]^{\ast}$ be the dual of a smooth interval $[u,w]$ in type $A$. Since $[u,w]$ is smooth, for all $u\leq y<w$ the  Bruhat interval $[y,w]$ is smooth. Assuming Conjecture \ref{ConjC4}, Proposition \ref{Cl1} shows that $[y,w]$ has a special matching, and hence $[y,w]^{\ast}$ has a special matching too. Thus $[u,w]^{\ast}$ is a zircon.
\end{proof} 
Recall that any finite Coxeter group has a unique element of maximal length denoted by  $w_0$. Multiplication by $w_0$ is an anti-automorphism of the Bruhat order. That is, $x\leq y \Leftrightarrow xw_0 \geq yw_0$.
\begin{Corollary}
Conjecture \ref{ConjC4} implies Conjecture \ref{Conj2} in type $A$.
\end{Corollary}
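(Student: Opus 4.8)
The plan is to transfer the statement about zircons into a statement about smooth intervals by composing with the Bruhat anti-automorphism $x \mapsto x w_0$, and then feed the resulting smooth interval into Conjecture \ref{ConjC4}.

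First I would fix a zircon $[u,w]$ in type $A$ and set $[a,b] := [w w_0,\, u w_0]$, which is a genuine interval since $u < w$ forces $w w_0 < u w_0$. Because $x \leq y \Leftrightarrow x w_0 \geq y w_0$, the map $x \mapsto x w_0$ restricts to an order-reversing bijection $[u,w] \to [a,b]$, so $[a,b]^{\ast} \cong [u,w]$. Since being a zircon is a property of the order relation alone (Definition \ref{Zirc}), $[a,b]^{\ast}$ is a zircon; Corollary \ref{Cor1} then makes $[a,b]$ rationally smooth, and in type $A$ this is the same as smooth (as recorded after Theorem \ref{Thc}). Hence Conjecture \ref{ConjC4} applies to $[a,b]$.

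Next I would unwind the conclusion of Conjecture \ref{ConjC4} for $[a,b]$. Using $\ell(x w_0) = \ell(w_0) - \ell(x)$ one gets $D_L(x w_0) = S \setminus D_L(x)$, hence $S \setminus D_L(a) = S \setminus D_L(w w_0) = D_L(w)$. Thus Conjecture \ref{ConjC4} yields an $s \in D_L(w)$ with either $s a \not\leq b$ or $s b < b$. For the first alternative, $s a \leq b \Leftrightarrow (s w) w_0 \leq u w_0 \Leftrightarrow s w \geq u$ by the anti-automorphism again, so $s a \not\leq b$ is exactly alternative (1) of Conjecture \ref{Conj2}. For the second, $s b < b$ says $s \in D_L(u w_0) = S \setminus D_L(u)$, i.e. $s u > u$, which is alternative (2). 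Since $s \in D_L(w)$, this is precisely the conclusion of Conjecture \ref{Conj2} for $[u,w]$.

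The whole argument is bookkeeping around the dictionary $x \mapsto x w_0$; the one point to be careful about — and the step I would regard as the real content — is checking that all hypotheses needed to invoke Conjecture \ref{ConjC4} genuinely hold for $[a,b]$: that it is a nondegenerate interval in the same type-$A$ system, and that we have honest smoothness rather than merely rational smoothness, which is exactly where the type-$A$ equivalence of the two notions is used.
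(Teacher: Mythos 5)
Your argument is correct and is essentially the paper's own proof: both pass to $[ww_0,uw_0]\cong[u,w]^{\ast}$, use Corollary \ref{Cor1} (plus the type-$A$ equivalence of rational smoothness and smoothness) to make Conjecture \ref{ConjC4} applicable, and then translate its conclusion back through $x\mapsto xw_0$ using $D_L(xw_0)=S\setminus D_L(x)$. Your explicit attention to the smooth versus rationally smooth point is a slight refinement of the paper's wording, but the route is the same.
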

\begin{proof}
Let $[u,w]$ be a zircon, and assume Conjecture \ref{ConjC4} holds. By Corollary \ref{Cor1}, $[ww_0, uw_0]\cong [u,w]^{\ast}$ is smooth. Hence, there exists $s\in S\setminus D_{L}(ww_0)$ such that:
\begin{itemize}
\item[(1)] $sww_0 \not \leq uw_0$, or   
\item[(2)] $suw_0 <uw_0$.
\end{itemize}
Thus, $s\in D_{L}(w)$ and 
\begin{itemize}
\item[(1)] $sw \not \geq u$, or   
\item[(2)] $su > u$.
\end{itemize}
\end{proof}
Recalling that Delanoy \cite{MR2397355} showed that Conjecture \ref{Conj2} implies Conjecture \ref{Conj1}, we notice that the hypotheses of the following theorem are satisfied in type $A$ if Conjecture \ref{ConjC4} holds.
\begin{Theorem}\label{Th2}
Suppose that $W$ is of type $A$, $D$, or $E$. If Conjecture \ref{Conj1} and Conjecture \ref{ConjC3} are true in $W$, then the following are equivalent for $u,w\in W$:
	\begin{enumerate}
		\item $[u,w]^{\ast}$ is a zircon,
		\item $[u,w]^{\ast}$ is isomorphic to some lower Bruhat interval,
		\item $[u,w]$ is smooth.
	\end{enumerate}
\end{Theorem}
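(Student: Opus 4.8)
The plan is to prove the four implications $(2)\Rightarrow(1)$, $(1)\Rightarrow(3)$, $(3)\Rightarrow(1)$, and $(1)\Rightarrow(2)$, which together give $(1)\Leftrightarrow(2)\Leftrightarrow(3)$. Three of these arrows are essentially at hand: $(2)\Rightarrow(1)$ is part of Proposition \ref{Pim}, $(1)\Rightarrow(3)$ combines Proposition \ref{Pim} with the equivalence of smoothness and rational smoothness in simply laced type, and $(3)\Rightarrow(1)$ is one of the hypothesized conjectures. The only arrow needing a new observation is $(1)\Rightarrow(2)$, whose point is to turn the dual poset $[u,w]^{\ast}$ into a genuine Bruhat interval so that Conjecture \ref{Conj1} can be applied.

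Concretely, I would argue as follows. The implication $(2)\Rightarrow(1)$ is exactly Proposition \ref{Pim}(\ref{imp1}). For $(1)\Rightarrow(3)$: if $[u,w]^{\ast}$ is a zircon then Corollary \ref{Cor1} (equivalently Proposition \ref{Pim}(\ref{imp3})) gives that $[u,w]$ is rationally smooth, and since $W$ is finite and simply laced, the result of Carrell and Kuttler recalled in Section \ref{Secr} equates rational smoothness of $X(w)$ with smoothness of $X(w)$, so $[u,w]$ is smooth. For $(3)\Rightarrow(1)$: this is precisely Conjecture \ref{ConjC3}, which is assumed to hold in $W$.

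The remaining arrow $(1)\Rightarrow(2)$ carries the single genuine idea, and I expect it to be the main point, though only mildly difficult: one must realize that Conjecture \ref{Conj1}, stated for Bruhat intervals, should be applied not to $[u,w]^{\ast}$ itself but to an associated honest interval. Since $W$ is finite it has a longest element $w_0$, and right multiplication by $w_0$ is an anti-automorphism of the Bruhat order; restricting it to $[u,w]$ yields an order-reversing bijection onto $[ww_0,uw_0]$, hence a poset isomorphism $[u,w]^{\ast}\cong[ww_0,uw_0]$. So if $[u,w]^{\ast}$ is a zircon then so is the Bruhat interval $[ww_0,uw_0]$, and Conjecture \ref{Conj1} (true in $W$ by hypothesis) supplies a lower Bruhat interval $[e,x]$ with $[ww_0,uw_0]\cong[e,x]$; composing the two isomorphisms gives $[u,w]^{\ast}\cong[e,x]$, which is statement $(2)$. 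This closes the cycle and proves the equivalence. Note that this step, like the appeal to Carrell and Kuttler, uses that $W$ is finite, which is the case for the types $A$, $D$, $E$ in question.
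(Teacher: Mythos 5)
Your proof is correct and follows essentially the same route as the paper: $(2)\Rightarrow(1)$ from Proposition \ref{Pim}, $(1)\Rightarrow(3)$ via Corollary \ref{Cor1}, $(3)\Rightarrow(1)$ from Conjecture \ref{ConjC3}, and $(1)\Rightarrow(2)$ from Conjecture \ref{Conj1}. Your explicit use of the $w_0$ anti-automorphism to view $[u,w]^{\ast}$ as the honest Bruhat interval $[ww_0,uw_0]$ before invoking Conjecture \ref{Conj1}, and of Carrell--Kuttler to pass from rational smoothness to smoothness, only spells out details the paper leaves implicit.
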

\begin{proof}

  The implication $(1)\Rightarrow (2)$ follows if Conjecture \ref{Conj1} is true. The converse $(2)\Rightarrow (1)$  holds since every lower Bruhat interval is a zircon.
Corollary \ref{Cor1} shows that $(1)\Rightarrow (3)$.
Finally, Conjecture \ref{ConjC3} asserts $(3)\Rightarrow (1)$.
\end{proof}
Since we have verified Conjecture \ref{ConjC4} in type $A_n$, $1\leq n \leq 8$, the conclusion of the previous theorem holds in these groups:

\begin{Corollary}\label{CTh2}
	Let $[u,w]$ be an interval in $A_n$ where $1\leq n \leq 8$. The following conditions are equivalent:
	\begin{enumerate}
		\item $[u,w]^{\ast}$ is a zircon,
		\item $[u,w]^{\ast}$ is isomorphic to some lower Bruhat interval,
		\item $[u,w]$ is smooth.
	\end{enumerate}
\end{Corollary}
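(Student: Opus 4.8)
The plan is to obtain Corollary~\ref{CTh2} as a direct specialization of Theorem~\ref{Th2}. That theorem already gives the equivalence of (1), (2), and (3) whenever Conjecture~\ref{Conj1} and Conjecture~\ref{ConjC3} are both true in $W$, so the entire task reduces to checking that these two hypotheses hold when $W$ is of type $A_n$ with $1\leq n\leq 8$. The input for this is the finite computer verification: using SageMath one checks, for every smooth interval $[u,w]$ in $A_n$ with $n\leq 8$, that some $s\in S\setminus D_{L}(u)$ satisfies condition~\ref{Sta1} or condition~\ref{Sta2} of Conjecture~\ref{ConjC4}. Since each $A_n$ is finite there are only finitely many pairs $(u,w)$ to inspect, and for each pair smoothness, Bruhat comparability, and the left descent sets are all effectively computable, so this is a genuine finite check rather than a claim needing further proof.

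First I would record that Conjecture~\ref{ConjC4} holding in $A_n$ forces Conjecture~\ref{ConjC3} to hold in $A_n$; this is precisely Corollary~\ref{C:c}, whose proof (via Proposition~\ref{Cl1}) works inside a fixed $A_n$. Next I would invoke the corollary proved just above, stating that Conjecture~\ref{ConjC4} in type $A$ implies Conjecture~\ref{Conj2} in type $A$: that argument only uses that $A_n$ is finite together with the anti-automorphism $x\mapsto xw_0$, so it applies verbatim for each $n\leq 8$. Finally, Delanoy's result that Conjecture~\ref{Conj2} implies Conjecture~\ref{Conj1} yields that Conjecture~\ref{Conj1} holds in $A_n$ for $n\leq 8$ as well. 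At this point both hypotheses of Theorem~\ref{Th2} are in force, and applying it gives the equivalence of (1), (2), and (3) for all intervals $[u,w]$ in $A_n$, $1\leq n\leq 8$.

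There is no real mathematical obstacle in this assembly, since every implication it uses is already established; the only place something could go wrong is the finite search verifying Conjecture~\ref{ConjC4}, and so I expect the main practical difficulty to be keeping that search tractable as $n$ approaches $8$. For this it is worth enumerating only the smooth pairs $[u,w]$ rather than all pairs — in type $A$ the smooth $w$ are characterized by pattern avoidance, and smoothness of $[u,w]$ at $u$ can likewise be pruned combinatorially — which keeps the computation well within reach for $n\leq 8$.
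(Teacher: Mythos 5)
Your proposal is correct and follows exactly the paper's route: the computer verification of Conjecture~\ref{ConjC4} in $A_n$, $n\leq 8$, combined with Corollary~\ref{C:c}, the implication to Conjecture~\ref{Conj2} via $w_0$, and Delanoy's result giving Conjecture~\ref{Conj1}, puts the hypotheses of Theorem~\ref{Th2} in force. This is precisely how the paper deduces Corollary~\ref{CTh2}, only spelled out in more detail than the paper's one-sentence justification.
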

The following example gives an illustration that the lower interval mentioned in item $2$ of Theorem \ref{Th2} and Corollary \ref{CTh2} cannot in general be expected to be found in the same type as $u$ and $w$.
\begin{Example}
Consider the interval $I=[s_1,s_1s_3s_2s_1s_4s_3]$ in type $A_4$, where $m(s_i,s_{i+1})=3$ for $i=1,2,3$, and $m(s_i,s_j)=2$ for $\vert i-j \vert\geq 2$.
Also, consider the interval $I'=[e,s'_3s'_1s'_2s'_4s'_3]$  in type $D_4$, where $m(s'_1,s'_3)=m(s'_2,s'_3)=m(s'_4,s'_3)=3$, and $m(s'_i,s'_j)=2$ otherwise. The interval $I$ is not isomorphic to any lower Bruhat interval in type $A$. 
The reason is that if $\varphi:I\rightarrow L$ were a poset isomorphism and $L$ some lower interval in type $A$, $\varphi(s_1s_3)$ would be a simple reflection below the length three reflections $\varphi(s_1s_3s_4s_3)$, 
$\varphi(s_1s_3s_2s_3)$, and $\varphi(s_3s_2s_1s_3)$. However, in type $A$, every simple reflection is below at most two length three reflections.
But $I^{\ast}$ is smooth, and $I\cong I'$.
\end{Example}

    \bibliographystyle{amsplain}
	\bibliography{proj_4r}
\end{document}